\documentclass[a4paper,12pt]{article}
\usepackage[top=2.5cm,bottom=2.5cm,left=2.5cm,right=2.5cm]{geometry}
\usepackage{cite, amsmath, amssymb}

\usepackage{graphicx}
\usepackage{color, colortbl}

\newtheorem{thm}{Theorem}
\newtheorem{cor}{Corollary}
\newtheorem{lem}{Lemma}
\newtheorem{rmk}{Remark}

\newenvironment{proof}{\par \noindent \textbf{Proof: }}{\QED \par \bigskip \par}
\newcommand{\QED}{\hfill$\square$}

\newcommand{\al}{\alpha}

\allowdisplaybreaks[4]

\begin{document}

\baselineskip=0.30in

%\vspace*{40mm}

\begin{center}

{\LARGE \bf Estimating Some General Molecular Descriptors of Saturated Hydrocarbons}
%{\LARGE \bf On the General Sum-connectivity Index of Molecular $(n,m)$-Graphs}
%{\LARGE \bf On the Extremal Molecular $(n,m)$-Graphs for General Sum-connectivity Index}
\vspace{10mm}

{\large \bf Akbar Ali$^{\dag}$, Zhibin Du$^{\ddag}$, Kiran Shehzadi$^{\dag}$}

\vspace{6mm}

\baselineskip=0.20in

$^{\dag}${\em Knowledge Unit of Science, University of Management \& Technology}\\
{\em Sialkot, Pakistan}\\
E-mail: {\tt akbarali.maths@gmail.com, kiranshabbir43@gmail.com}\\

\vspace{2mm}

$^{\ddag}${\em School of Mathematics and Statistics, Zhaoqing University}\\
{\em Zhaoqing 526061, Guangdong, P.R. China}\\
E-mail: {\tt zhibindu@126.com}

\makeatother

%\footnotetext{   {\it E-mail address:}    {\tt zhibindu@126.com}
%(Z. Du).}

\vspace{6mm}

%{\small (Received \today)}

\end{center}

\vspace{6mm}

\baselineskip=0.23in

\noindent {\bf Abstract.}
Three general molecular descriptors, namely the general sum-connectivity index, general Platt index and ordinary generalized geometric-arithmetic index, are studied here. Best possible bounds for the aforementioned descriptors of arbitrary saturated hydrocarbons are derived. These bounds are expressed in terms of number of carbon atoms and number of carbon-carbon bonds of the considered hydrocarbons.\\[2mm]

%The general sum-connectivity index of a graph $G$ is denoted by $\chi_{\alpha}(G)$ and is defined as $\sum_{uv\in E(G)}(d_{u}+d_{v})^{\alpha}$, where $uv$ is the edge connecting the vertices $u,v\in V(G)$, $d_{u}$ is degree of the vertex $u$ and $\alpha$ is a non-zero real number. In this paper, a sharp lower bound (for $-1\le \al<0$) and a sharp upper bound (for $0< \al\le2$) on $\chi_{\alpha}$ for molecular connected $(n,m)$-graphs are derived, and the corresponding extremal graphs are characterized. The graphs with minimum $\chi_{\alpha}$ value for $-1\le \al<0$ and with maximum $\chi_{\alpha}$ value for $0< \al \le2$, are also obtained from the collection of all molecular connected $(n,m)$-graphs (with sufficiently large $n$). The obtained results recover some of the known results concerning the first Zagreb index, sum-connectivity index, harmonic index, and generate new ones concerning hyper Zagreb index and reformulated first Zagreb index.

\noindent
{\bf Keywords:} Molecular descriptor; Topological index; general sum-connectivity index; general Platt index; ordinary generalized geometric-arithmetic index. \\[2mm]

\baselineskip=0.30in

\vspace{4mm}

\section{Introduction}

Molecules can be represented by graphs (usually known as molecular graphs) in which vertices correspond to the atoms while edges represent the covalent bonds between atoms \cite{Trinajstic92,Gutman86}. All the graphs, considered in the present study, are hydrogen-depleted molecular graphs representing saturated hydrocarbons.
According to Todeschini and Consonni \cite{ToCo2000} ``\textit{molecular descriptor} is the final result of a logical and mathematical procedure which transforms chemical information encoded within a symbolic representation of a molecule into an useful number or the result of some standardized experiment''. A molecular descriptor calculated from a molecular graph is simply known as a \textit{topological index} \cite{Trinajstic92,Gutman86}. In the quantitative structure-property relationship studies, topological indices are often used to model the physicochemical properties of molecules \cite{Atabati17,Mozrzymas17,Santos17,Shafiei17}.

The Platt index ($Pl$), which was proposed for predicting paraffin properties \cite{Platt52}, is one of the oldest topological indices. This index is defined as:
\[Pl(G)=\sum_{uv\in E(G)}(d_{u}+d_{v}-2),\]
where $uv$ is the edge connecting the vertices $u,v$ of the (molecular) graph $G$, $E(G)$ is the edge set of $G$, and $d_{u}$ is the degree of the vertex $u$. It should be mentioned here that the Platt index can be written as
$$
Pl(G)=M_{1}(G)-2m,
$$
where $m$ is the number of edges in the graph $G$ and $M_{1}$ is the first Zagreb index, appeared in 1972 within the study of total $\pi$-electron energy of alternant hydrocarbons \cite{Gutman72}. The first Zagreb index can be defined \cite{Doslic11} as:
\[M_{1}(G)=\sum_{uv\in E(G)}(d_{u}+d_{v}).\]
Mathematical properties related to the first Zagreb index (and hence related to the Platt index) can be found in the recent surveys \cite{Borovicanin-17,Borovicanin-MCM19,Ali18} and related references listed therein.

The connectivity index (also known as branching index and Randi\'{c} index) \cite{r3} is one of the most studied and applied topological indices, which was proposed in 1975 for measuring the extent of branching of the carbon-atom skeleton of saturated hydrocarbons. The connectivity index for a graph $G$ is defined as
\[R(G)=\displaystyle\sum_{uv\in E(G)}(d_{u}d_{v})^{-\frac{1}{2}}.\]
Details about the chemical applicability and mathematical properties of this index can be found in the survey \cite{Li08}, recent papers \cite{Ali17R,Cui17,Li16,Mansour17,Das-17,Gutman-18} and related references listed therein.

Several modified versions of the connectivity index were appeared in literature. One of such modified versions is the sum-connectivity index, defined as:
\[\chi(G)=\sum_{uv\in E(G)}(d_{u}+d_{v})^{-\frac{1}{2}},\]
which was proposed in 2009, by Zhou and Trinajsti\'{c} \cite{Zhou09}. After that, Zhou and Trinajsti\'{c} \cite{Zhou10} introduced the following generalization of the sum-connectivity index and first Zagreb index:
\[\chi_{\alpha}(G)=\sum_{uv\in E(G)}(d_{u}+d_{v})^{\alpha},\]
where $\alpha$ is a non-zero real number. Chemical applicability of the sum-connectivity index was in \cite{Lucic-13,Lucic-15,Lucic-Trinajstic-Zhou-CPL-09,Vukcevic-Trinajstic-CCA-11}. We recall that $\chi_{2}$ is the hyper-Zagreb index \cite{Shirdel13} and $H(G) = 2\chi_{-1}(G)$, where $H$ is the harmonic index \cite{Fajtlowicz87}. The general sum-connectivity index has attracted a considerable attention from researchers, see (for example) the recent survey \cite{Ali-MATCH-19}, recent papers \cite{Ali-Dimitov-2018,Zhu16,Cui-17,Tomescu16,Ali17,Ali-18,Wang17,Ramane17,Akhter17,Arshad17,Jamil17} and related references listed therein.

Recently, the general Platt index ($Pl_{\alpha}$) was proposed in \cite{Ali-18}, which is defined as:
\[Pl_{\alpha}(G)=\sum_{uv\in E(G)}(d_{u}+d_{v}-2)^{\alpha},\]
where $\alpha$ is a non-zero real number. We recall that $Pl_2$ coincides with the reformulated first Zagreb index $EM_1$, introduced in \cite{Mili04}.
The ordinary generalized geometric-arithmetic index for a molecular graph $G$ is defined \cite{Eliasi11} as
\begin{equation}\label{Eq0-GA}
\mbox{OGA}_{k}(G)=\sum_{uv\in E(G)} \left(\frac{2\sqrt{d_u d_v}}{d_u+d_v} \right)^{k},
\end{equation}
where $k$ is any positive real number.

A graph with $n$ vertices and $m$ edges is known as an $(n,m)$-graph. A graph in which every vertex has degree at most 4 is called a molecular graph. Undefined notations and terminologies from (chemical) graph theory can be found in \cite{Harary69,Trinajstic92,Gutman86,Bondy08}.

The main purpose of the present paper is to derive the sharp lower bounds (for $-1\le \al<0$ and for $0<k\le 1$) and sharp upper bounds (for $0< \al\le2$) on the topological indices $\chi_{\alpha}$, $Pl_\alpha$ and $\mbox{OGA}_{k}$ for molecular $(n,m)$-graphs.

\section{Main Results}\label{Sec3}

%In order to established the bounds, we need to define some notations.

%All the graphs considered in the present study have at least 5 vertices. Let $\mathbb{CG}_{n,m}$ be the collection of all $(n,m)$-chemical graphs.

Let $n_{i}(G)$ (or simply $n_i$) be the number of vertices of degree $i$ in a graph $G$. Denote by $x_{i,j}(G)$ (or simply by $x_{i,j}$) the number of edges in a graph $G$ connecting the vertices of degrees $i$ and $j$. The general sum-connectivity index for any molecular $(n,m)$-graph $G$ can be rewritten as:
\begin{equation}\label{Eq0}
\chi_{\alpha}(G)=\sum_{1\le i \le j \le 4} x_{i,j}(i+j)^{\alpha}.
\end{equation}
Also, the following system of equations holds for any molecular $(n,m)$-graph $G$:
\begin{equation}\label{Eq555b}
\sum_{i=1}^{4}n_{i}=n,
\end{equation}
\begin{equation}\label{Eq556b}
\sum_{i=1}^{4}i\cdot n_{i}=2m,
\end{equation}
\begin{equation}\label{Eq557b}
\sum_{ \substack{ 1\leq i\leq 4 \\
         i\neq j}}x_{j,i}+2x_{j,j}=j\cdot n_{j}
\end{equation}
where $j=1,2,3,4$.

The following values of $x_{1,4}$ and $x_{4,4}$ can be obtained by solving the system of Eqs. (\ref{Eq555b})-(\ref{Eq557b}) (see also  \cite{Gutman2000}):
\[x_{1,4}=\frac{4n}{3}-\frac{2m}{3}-\frac{4}{3}x_{1,2}-\frac{10}{9}x_{1,3}-\frac{2}{3}x_{2,2}
-\frac{4}{9}x_{2,3}-\frac{1}{3}x_{2,4}-\frac{2}{9}x_{3,3}-\frac{1}{9}x_{3,4},\]
\[x_{4,4}=-\frac{4n}{3}+\frac{5m}{3}+\frac{1}{3}x_{1,2}+\frac{1}{9}x_{1,3}-\frac{1}{3}x_{2,2}
-\frac{5}{9}x_{2,3}-\frac{2}{3}x_{2,4}-\frac{7}{9}x_{3,3}-\frac{8}{9}x_{3,4}.\]
After substituting the values of $x_{1,4}$ and $x_{4,4}$ in Eq. (\ref{Eq0}), one has:
\begin{eqnarray}\label{Eq557bc}
\chi_\al(G)&=& \frac{4}{3}(5^\al-8^\al)n-\frac{1}{3}(2\cdot 5^\al-5\cdot 8^\al)m + x_{1,2}\Theta_{1,2}
+x_{1,3}\Theta_{1,3} + x_{2,2}\Theta_{2,2}\nonumber \\
&& + x_{2,3}\Theta_{2,3} +x_{2,4}\Theta_{2,4}+x_{3,3}\Theta_{3,3}+x_{3,4}\Theta_{3,4},
\end{eqnarray}
where
$$
\Theta_{1,2}= 3^\al -\frac{4}{3}\cdot 5^\al +\frac{1}{3}\cdot 8^\al, \ \
\Theta_{1,3} = 4^\al -\frac{10}{9}\cdot 5^\al +\frac{1}{9}\cdot 8^\al,
$$
$$
\Theta_{2,2}= 4^\al -\frac{2}{3}\cdot 5^\al -\frac{1}{3}\cdot 8^\al, \ \
\Theta_{2,3}= \frac{5}{9} \left( 5^\al - 8^\al \right),
$$
$$
\Theta_{2,4}= 6^\al -\frac{1}{3}\cdot 5^\al -\frac{2}{3}\cdot 8^\al, \ \
\Theta_{3,3}= 6^\al -\frac{2}{9}\cdot 5^\al -\frac{7}{9}\cdot 8^\al,
$$
$$
\Theta_{3,4}= 7^\al -\frac{1}{9}\cdot 5^\al -\frac{8}{9}\cdot 8^\al.
$$
Setting
\begin{equation}\label{Eq-K-00}
\Gamma(G)= x_{1,2}\Theta_{1,2}+x_{1,3}\Theta_{1,3} + x_{2,2}\Theta_{2,2} + x_{2,3}\Theta_{2,3} +x_{2,4}\Theta_{2,4}+x_{3,3}\Theta_{3,3}+x_{3,4}\Theta_{3,4},
\end{equation}
and Eq. (\ref{Eq557bc}) yields
\begin{equation}\label{Eq7}
\chi_\al(G)= \frac{4}{3}(5^\al-8^\al)n-\frac{1}{3}(2\cdot 5^\al-5\cdot 8^\al)m + \Gamma(G).
\end{equation}

From Figures \ref{f1} and \ref{f2}, it is clear that $\Theta_{1,2}, \Theta_{1,3}, \Theta_{2,2}, \Theta_{2,3}, \Theta_{2,4}, \Theta_{3,3}$ and $\Theta_{3,4}$ are all positive for $-1\le \al <0$ and all negative for $0< \al \le 2$, and hence from Eq. (\ref{Eq-K-00}) it follows that
\[
\Gamma(G) \begin{cases}
\ge 0 & \text{for $-1\le\alpha<0$,}\\
\le 0 & \text{for $0<\alpha\le2$,}
\end{cases}
\]
with either equality if and only if $x_{1,2}=x_{1,3}=x_{2,2} =x_{2,3} =x_{2,4}=x_{3,3}=x_{3,4}=0$.
Therefore, from Eq. (\ref{Eq7}), we have the following result.

\begin{figure}[h]
  \centering
   \includegraphics[width=6.3in, height=4.6in]{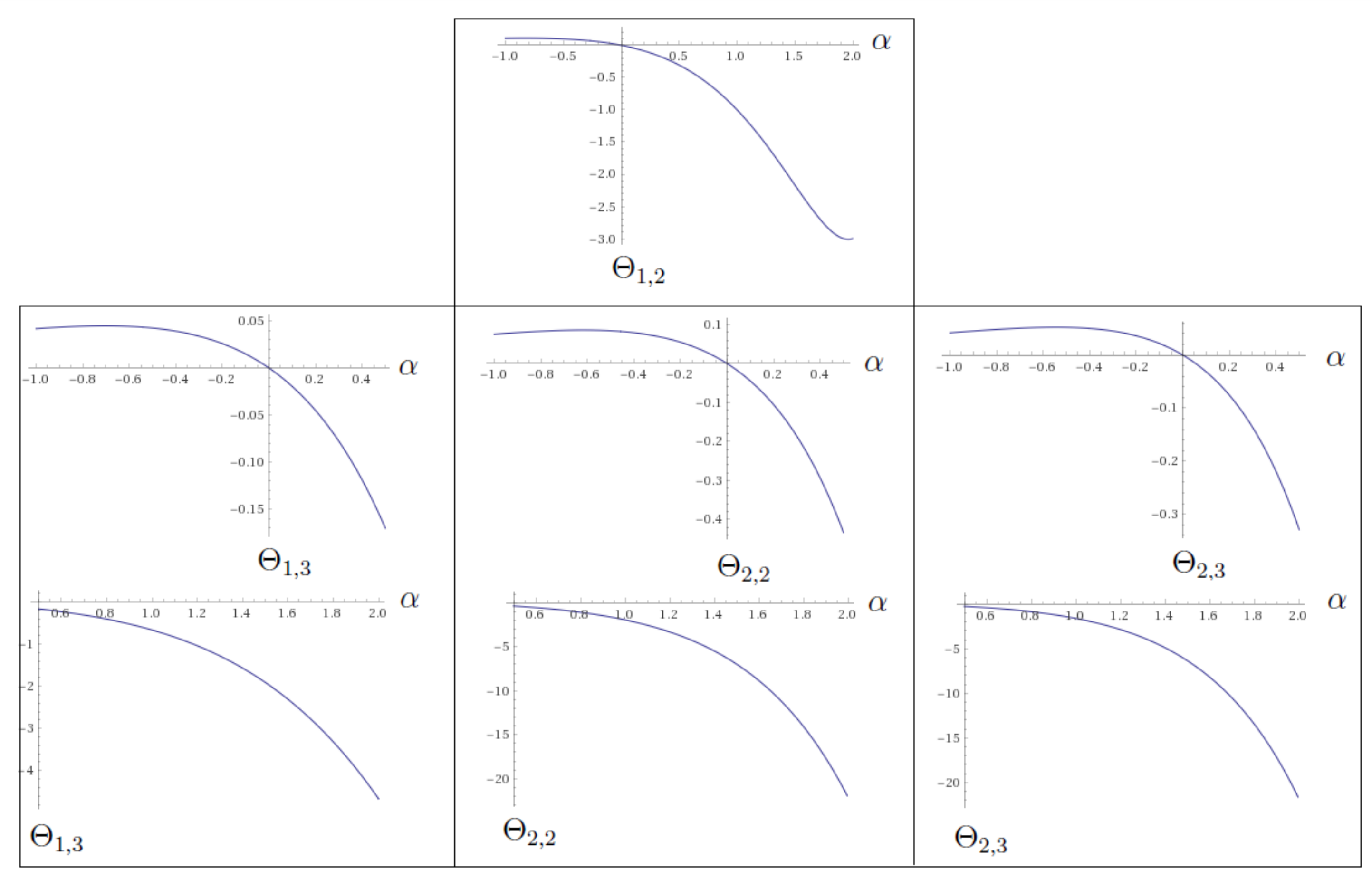}
    \caption{The graphs of $\Theta_{1,2}, \Theta_{1,3}, \Theta_{2,2}$ and $\Theta_{2,3}$ for $-1\le \al \le 2$.}
    \label{f1}
\end{figure}

\begin{figure}[h]
   \centering
   \includegraphics[width=6.3in, height=3.1in]{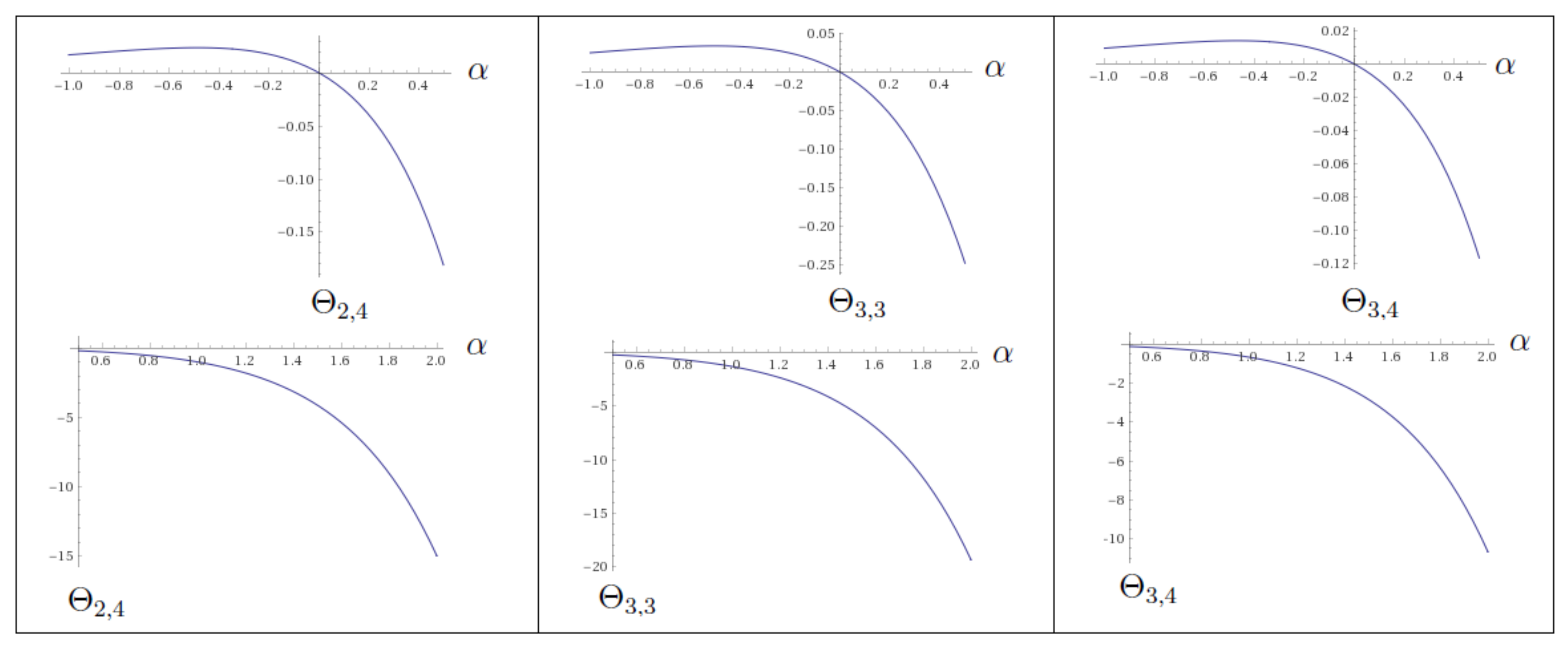}
    \caption{The graphs of $\Theta_{2,4}, \Theta_{3,3}$ and $\Theta_{3,4}$ for $-1\le \al \le 2$.}
     \label{f2}
 \end{figure}

\begin{thm}\label{thm-bd}
If $G$ is a molecular $(n,m)$-graph, $n\ge5$, then
\[
\chi_\al(G) \begin{cases}
\ge \frac{4}{3}(5^\al-8^\al)n-\frac{1}{3}(2\cdot 5^\al-5\cdot 8^\al)m & \text{for $-1\le\alpha<0$,}\\
\le \frac{4}{3}(5^\al-8^\al)n-\frac{1}{3}(2\cdot 5^\al-5\cdot 8^\al)m & \text{for $0<\alpha\le2$,}
\end{cases}
\]
with either equality if and only if $G$ contains no vertices of degrees $2$ and $3$.
\end{thm}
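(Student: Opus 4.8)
The plan is to read the conclusion directly off Equation (\ref{Eq7}), which already separates $\chi_\alpha(G)$ into the linear expression $\frac{4}{3}(5^\alpha-8^\alpha)n-\frac{1}{3}(2\cdot 5^\alpha-5\cdot 8^\alpha)m$ plus the correction term $\Gamma(G)$. Since the discussion preceding the theorem establishes that each coefficient $\Theta_{1,2},\Theta_{1,3},\Theta_{2,2},\Theta_{2,3},\Theta_{2,4},\Theta_{3,3},\Theta_{3,4}$ is strictly positive on $[-1,0)$ and strictly negative on $(0,2]$, and since all the counts $x_{i,j}$ entering (\ref{Eq-K-00}) are nonnegative, the sign of $\Gamma(G)$ is forced: $\Gamma(G)\ge 0$ for $-1\le\alpha<0$ and $\Gamma(G)\le 0$ for $0<\alpha\le 2$. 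Discarding $\Gamma(G)$ from (\ref{Eq7}) in the appropriate direction then yields exactly the stated lower and upper bounds.

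Next I would treat the equality case. From (\ref{Eq7}) the bound is attained precisely when $\Gamma(G)=0$, and because $\Gamma(G)$ is a sum of terms each of which is a nonzero multiple (of fixed sign) of some $x_{i,j}$, it vanishes if and only if all seven counts $x_{1,2}=x_{1,3}=x_{2,2}=x_{2,3}=x_{2,4}=x_{3,3}=x_{3,4}=0$. The one step that needs a genuine argument is translating this edge-count condition into the structural statement in the theorem, namely that $G$ has no vertex of degree $2$ or $3$. For this I would invoke the degree-balance relation (\ref{Eq557b}): taking $j=2$ gives $x_{1,2}+x_{2,3}+x_{2,4}+2x_{2,2}=2n_2$, and taking $j=3$ gives $x_{1,3}+x_{2,3}+x_{3,4}+2x_{3,3}=3n_3$. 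Under the vanishing condition both left-hand sides are $0$, forcing $n_2=n_3=0$; conversely, if $n_2=n_3=0$ then no edge can be incident to a vertex of degree $2$ or $3$, so all seven counts vanish. This establishes the equivalence.

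The argument carries no serious obstacle beyond this bookkeeping; the only substantive content lies in the sign analysis of the seven functions $\Theta_{i,j}$, which the preceding figures supply (and which, for a figure-free proof, one could instead confirm by a direct monotonicity or convexity check of each $\Theta_{i,j}$ over $[-1,2]$). I would also record, for completeness, that the hypothesis $n\ge 5$ ensures the extremal class is nonempty—for instance the star $K_{1,4}$, and more generally any molecular graph built solely from vertices of degrees $1$ and $4$, has $n_2=n_3=0$ and hence attains the bound—so the equality characterization is not vacuous.
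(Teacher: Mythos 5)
Your proposal is correct and follows essentially the same route as the paper: it reads the bound off Eq.~(\ref{Eq7}) using the sign analysis of the seven coefficients $\Theta_{i,j}$, and characterizes equality via the vanishing of $\Gamma(G)$. Your explicit use of Eq.~(\ref{Eq557b}) with $j=2,3$ to translate $x_{1,2}=\cdots=x_{3,4}=0$ into $n_2=n_3=0$ is a welcome bit of bookkeeping that the paper leaves implicit, but it does not constitute a different argument.
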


The next result follows from the facts $M_1 = \chi_1$, $\chi = \chi_{-1/2}$, $H = 2\chi_{-1}$ and Theorem \ref{thm-bd}.
\begin{cor}\label{cor-bd}
If $G$ is a molecular $(n,m)$-graph, $n\ge5$, then the following inequalities hold:
\[
M_1(G)\le 10m-4n,
\]
\[
H(G)\ge \frac{4n+3m}{20},
\]
\[
\chi(G)\ge \frac{4}{3}\left(\frac{1}{\sqrt{5}}-\frac{1}{2\sqrt{2}}\right)n+\frac{1}{3}\left(\frac{5}{2\sqrt{2}}-\frac{2}{\sqrt{5}}\right)m.
\]
The equality in any of the above inequalities holds if and only if $G$ does not contain vertices of degrees 2 and 3.
\end{cor}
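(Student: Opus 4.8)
The plan is to obtain all three inequalities as direct specializations of Theorem \ref{thm-bd}, using the identities $M_1 = \chi_1$, $\chi = \chi_{-1/2}$ and $H = 2\chi_{-1}$ recorded in the Introduction. For each index I would substitute the corresponding value of $\alpha$ into the two-sided bound of Theorem \ref{thm-bd}, taking care to select the correct branch: since $\alpha = 1$ lies in $(0,2]$, the index $M_1$ inherits the \emph{upper} bound, whereas $\alpha = -1/2$ and $\alpha = -1$ lie in $[-1,0)$, so $\chi$ and $H$ inherit the \emph{lower} bound.

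First, for $M_1(G) = \chi_1(G)$ I would put $\alpha = 1$, giving $5^\alpha - 8^\alpha = -3$ and $2\cdot 5^\alpha - 5\cdot 8^\alpha = -30$; substituting these into the upper bound collapses the right-hand side to $-4n + 10m$, which is the claimed bound $10m - 4n$. Next, for $\chi(G) = \chi_{-1/2}(G)$ I would put $\alpha = -1/2$, so that $5^\alpha = 1/\sqrt{5}$ and $8^\alpha = 1/(2\sqrt{2})$; inserting these into the lower bound and rewriting $-\tfrac{1}{3}(2\cdot 5^\alpha - 5\cdot 8^\alpha) = \tfrac{1}{3}(5\cdot 8^\alpha - 2\cdot 5^\alpha)$ reproduces the stated expression verbatim. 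Finally, for $H(G) = 2\chi_{-1}(G)$ I would put $\alpha = -1$, compute $5^{-1} - 8^{-1} = 3/40$ and $2\cdot 5^{-1} - 5\cdot 8^{-1} = -9/40$, obtain $\chi_{-1}(G) \ge \tfrac{1}{10}n + \tfrac{3}{40}m$ from the lower bound, and then multiply through by $2$ to reach $H(G) \ge \tfrac{1}{5}n + \tfrac{3}{20}m = (4n+3m)/20$.

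The equality characterisation transfers unchanged: by Theorem \ref{thm-bd}, each bound is attained if and only if $G$ has no vertices of degree $2$ or $3$, and since the scalars $\alpha \in \{1, -1/2, -1\}$ merely rename the same inequality, the same condition governs equality in all three cases (the factor $2$ in $H = 2\chi_{-1}$ does not affect the equality locus). There is no conceptual obstacle here; the only thing that requires care is the elementary arithmetic, in particular keeping the surd terms $1/\sqrt{5}$ and $1/(2\sqrt{2})$ in the case of $\chi$ aligned with the coefficients $\tfrac{4}{3}$ and $\tfrac{1}{3}$ so that the sign of the $m$-term comes out correctly after the factor $-\tfrac{1}{3}$ is distributed.
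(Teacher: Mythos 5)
Your proposal is correct and follows exactly the route the paper takes: the paper derives Corollary \ref{cor-bd} as an immediate consequence of Theorem \ref{thm-bd} together with the identities $M_1=\chi_1$, $\chi=\chi_{-1/2}$ and $H=2\chi_{-1}$, which is precisely your specialization at $\alpha\in\{1,-1/2,-1\}$. Your arithmetic at each value of $\alpha$, the branch selection between the upper and lower bounds, and the transfer of the equality condition are all accurate, so nothing is missing.
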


\begin{rmk}
In case of trees, one has $m=n-1$ and hence the first (second, respectively) inequality of Corollary \ref{cor-bd} generalizes the bound reported in \cite{Vukicevic14} (\cite{Liu14}, respectively) to any $(n,m)$-graph. The last inequality of Corollary \ref{cor-bd} was derived also in \cite{Zhou09}. Moreover, a sharp upper bound on the hyper Zagreb index $\chi_2$ also follows from Theorem \ref{thm-bd}.
\end{rmk}

We note that the bounds given in Theorem \ref{thm-bd} are sharp when $m+n\equiv0$ (mod  3). Now, we improve the aforementioned bounds (given in Theorem \ref{thm-bd}) when $m+n\equiv1$ (mod  3) and when $m+n\equiv2$ (mod  3).

%for given $(n,m)$, there does not necessarily exist an $(n,m)$-graph containing only vertices of degrees $1$ and $4$. So this section will devote to deducing a stricter bound about $\chi_\al$ among molecular $(n,m)$-graphs.

In what follows, we will take $\al\ne 1$, because the problem of finding sharp upper bound of $\chi_1$ for molecular $(n,m)$-graphs has already been solved in \cite{Hu05} (see also \cite{Vukicevic14} for $m=n-1$).

The following lemma can be verified by Mathematica easily.

\begin{lem} \label{Lem-K-01a}
The functions $\Theta_{1,2}, \Theta_{1,3}, \Theta_{2,2}, \Theta_{2,3}, \Theta_{2,4}, \Theta_{3,3}$ and $\Theta_{3,4}$, given in Eq. (\ref{Eq557bc}), satisfy the following inequalities:
\begin{enumerate}
\item[$(i)$]
\[
\begin{cases}
\min \{ \Theta_{1,3},\Theta_{2,3}, \Theta_{3,3} \}  > \Theta_{3,4} > 0  & \text{for $-1\le\alpha<0$,}\\
\max \{ \Theta_{1,3}, \Theta_{2,3}, \Theta_{3,3} \} < \Theta_{3,4} <0   & \text{for $0<\alpha<1$,}\\
\max \{ \Theta_{2,3}, \Theta_{3,3} , \Theta_{3,4} \} < \Theta_{1,3} <0 & \text{for $1<\alpha\le2$;}
\end{cases}
\]
\item[$(ii)$]
\[
\begin{cases}
\min\{ \Theta_{1,2}, \Theta_{2,2}, \Theta_{2,3} \} > \Theta_{2,4}>0 & \text{for $-1\le\alpha<0$,}\\
\max \{ \Theta_{1,2},\Theta_{2,2} , \Theta_{2,3} \} < \Theta_{2,4}<0  & \text{for $0<\alpha<1$,}\\
\max \{ \Theta_{2,2} , \Theta_{2,3} , \Theta_{2,4} \} < \Theta_{1,2}<0 & \text{for $1<\alpha\le2$.}
\end{cases}
\]
\end{enumerate}
\end{lem}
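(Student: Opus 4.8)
The plan is to reduce the lemma to a finite collection of pairwise comparisons and to control the sign of each by a single uniform device. Since the paragraph preceding the lemma already records that every $\Theta_{i,j}$ is positive on $[-1,0)$ and negative on $(0,2]$, the assertions ``$>0$'' and ``$<0$'' come for free, and only the ordering (the $\min$/$\max$) statements remain. Unwinding the $\min$ and $\max$, parts $(i)$ and $(ii)$ amount to a fixed list of inequalities of the form $\Theta_{p}(\al)>\Theta_{q}(\al)$ on a prescribed subinterval; for instance, on $[-1,0)$ one needs $\Theta_{3,4}<\min\{\Theta_{1,3},\Theta_{2,3},\Theta_{3,3}\}$ and $\Theta_{2,4}<\min\{\Theta_{1,2},\Theta_{2,2},\Theta_{2,3}\}$. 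First I would write out all of these differences explicitly.

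The key observation is that each $\Theta_{i,j}(0)=0$, so every difference $D(\al):=\Theta_{p}(\al)-\Theta_{q}(\al)$ is a \emph{balanced} exponential sum: writing $D(\al)=\sum_{c}a_{c}\,c^{\al}$ with $c$ ranging over a subset of $\{3,4,5,6,7,8\}$, one has $\sum_{c}a_{c}=0$. Using $c^{\al}-b^{\al}=\al\int_{b}^{c}t^{\al-1}\,dt$ together with this balance condition, I would rewrite
\[
D(\al)=\al\int_{c_{\min}}^{c_{\max}}W(t)\,t^{\al-1}\,dt,\qquad W(t):=\sum_{c>t}a_{c},
\]
where $W$ is the piecewise-constant partial-sum function, constant on each gap between consecutive bases. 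This converts every comparison into a question about the sign of one integral against the monotone kernel $t^{\al-1}$.

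The engine is then that $t\mapsto t^{\al-1}$ is strictly decreasing for $\al<1$ and strictly increasing for $\al>1$. On each gap I would replace $t^{\al-1}$ by its endpoint values (with the direction dictated by the monotonicity and by the sign of $W$ there), which collapses the sign of $\int W\,t^{\al-1}\,dt$ to an elementary inequality among finitely many numbers $3^{\al-1},\dots,8^{\al-1}$; multiplying by $\operatorname{sign}(\al)$ then yields the sign of $D$ on $[-1,0)$, on $(0,1)$, and on $(1,2]$ at once. For the ``symmetric'' differences whose bases split into two pairs of equal sum, namely $\Theta_{1,3}-\Theta_{3,4}=(4^{\al}+8^{\al})-(5^{\al}+7^{\al})$ and $\Theta_{1,2}-\Theta_{2,4}=(3^{\al}+8^{\al})-(5^{\al}+6^{\al})$, the function $W$ takes the clean shape $(-1,0,1)$, and one may alternatively invoke strict convexity/concavity of $t\mapsto t^{\al}$ (as $\{4,8\}$ majorizes $\{5,7\}$ and $\{3,8\}$ majorizes $\{5,6\}$) to read off the three signs simultaneously.

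I expect the main obstacle to lie in the comparisons on $(1,2]$ (and, by symmetry, those where $W$ is not single-signed), such as $\Theta_{1,3}$ versus $\Theta_{3,3}$ and $\Theta_{1,2}$ versus $\Theta_{2,3}$. There $W$ changes sign across the gaps, so the integral is a genuine competition between a negative inner block and a positive outer block, and the crude endpoint bounds must be arranged so that the positive outer contribution, amplified by the increasing kernel $t^{\al-1}$ for $\al>1$, still dominates; verifying the resulting finite inequalities in $3^{\al-1},\dots,8^{\al-1}$ is the real work. A uniform fallback, should the endpoint bounds prove too lossy on a given difference, is the generalized Descartes rule of signs for exponential sums: the number of real zeros of $\sum_{c}a_{c}c^{\al}$ is at most the number of sign changes of $(a_{c})$, which here never exceeds three (and equals two for the $(1,2]$ cases above), so pinning the roots via the always-present root $\al=0$, the value $D(1)$, and the limits $\al\to\pm\infty$ again isolates the sign on each interval.
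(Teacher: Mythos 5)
Your proposal is correct in substance, but note that there is essentially no paper proof to match: the authors justify this lemma with the single sentence that it ``can be verified by Mathematica easily,'' just as the sign claims for the $\Theta_{i,j}$ in the preceding paragraph rest only on Figures \ref{f1} and \ref{f2}. So your route is genuinely different --- it supplies a software-independent argument where the paper offers a finite computer check --- and its ingredients stand up to scrutiny. Every $\Theta_{i,j}$ does vanish at $\alpha=0$, so each pairwise difference is a balanced sum $D(\alpha)=\sum_c a_c c^{\alpha}$ with $\sum_c a_c=0$, and your representation $D(\alpha)=\alpha\int W(t)\,t^{\alpha-1}\,dt$ with the partial-sum weight $W(t)=\sum_{c>t}a_c$ is exactly right; the two symmetric differences are as you state, $\Theta_{1,3}-\Theta_{3,4}=(4^{\alpha}+8^{\alpha})-(5^{\alpha}+7^{\alpha})$ and $\Theta_{1,2}-\Theta_{2,4}=(3^{\alpha}+8^{\alpha})-(5^{\alpha}+6^{\alpha})$, and strict convexity/concavity of $t\mapsto t^{\alpha}$ settles them on all three $\alpha$-intervals at once. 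I spot-checked the remaining comparisons: the endpoint bounds close most of them (e.g.\ for $\Theta_{1,3}-\Theta_{2,3}$ on $(1,2]$ one gets $D/\alpha\ge 2\cdot 5^{\alpha-1}-5^{\alpha-1}>0$), and your fallback is genuinely needed, not just prudence --- for the sign of $\Theta_{1,2}$ on $(1,2]$ the crude bound yields $8^{\alpha-1}-2\cdot 3^{\alpha-1}$, which is positive at $\alpha=2$, so endpoint estimates fail there; the Laguerre--Descartes count (coefficient signs $+,-,+$, hence at most two roots, one being $\alpha=0$, with $\Theta_{1,2}(1)=-1$, $\Theta_{1,2}(2)=-3$, $\Theta_{1,2}(3)=31>0$) settles it, and incidentally explains why the Platt analogue $\Theta'_{1,2}$ acquires a root $x_0\approx1.8509$ inside $(0,2]$. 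Two caveats: first, the ``$>0$/$<0$'' assertions do not really come for free, since the paragraph you cite rests only on figures --- but your own machinery proves them (e.g.\ $\Theta_{3,4}$ has signs $-,+,-$, so at most two roots, one at $0$ and one below $-1$), so nothing essential is missing; second, yours is a plan rather than an executed proof --- roughly ten distinct differences plus four sign statements must each be pushed through, and for the three-sign-change cases such as $\Theta_{3,3}-\Theta_{3,4}$ (signs $-,+,-,+$, with one root below $-1$ and one above $2$) pinning the roots requires evaluating at the endpoints $\alpha=-1$ and $\alpha=2$ in addition to $\alpha=1$ and the limits. With those details filled in, your argument proves strictly more than the paper records: a verifiable explanation of which comparisons are convexity facts and which are delicate near $\alpha=2$, at the cost of case-by-case labor that the Mathematica check dispatches instantly.
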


\begin{lem} \label{Lem-K-01}
If a molecular graph $G$ satisfies the inequality $n_2+n_3 \ge 2$, then for the invariant $\Gamma (G)$, defined in Eq. (\ref{Eq-K-00}), it holds that
\[
\Gamma(G) \begin{cases}
> 2\Theta_{2,4}  & \text{for $-1\le\alpha<0$,}\\
< 2\Theta_{2,4}  & \text{for $0<\alpha<1$,}\\
< 2\Theta_{1,3}+\Theta_{3,4}   & \text{for $1<\alpha\le2$.}
\end{cases}
\]

\end{lem}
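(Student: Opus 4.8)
The plan is to localize the estimate to two low-degree vertices and then control the remaining edges purely by sign. Since $n_2+n_3\ge 2$, fix two distinct vertices $u,v$ with $\deg u,\deg v\in\{2,3\}$, and let $\Gamma'$ denote the partial sum in \eqref{Eq-K-00} ranging only over those edges of $G$ incident with $u$ or with $v$. Every such edge has an endpoint of degree $2$ or $3$, hence is one of the seven types occurring in $\Gamma$; moreover, by the discussion preceding Theorem~\ref{thm-bd}, each $\Theta_{i,j}$ is positive for $-1\le\alpha<0$ and negative for $0<\alpha\le 2$. Consequently $\Gamma-\Gamma'$ is a sum of terms all of one sign, so $\Gamma\ge\Gamma'$ when $-1\le\alpha<0$ and $\Gamma\le\Gamma'$ when $0<\alpha\le 2$; it therefore suffices to estimate $\Gamma'$, which depends only on the edges at $u$ and $v$, and this automatically absorbs any further degree-$2$ or degree-$3$ vertices.

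Next I would bound each edge at $u$ and $v$ via Lemma~\ref{Lem-K-01a}. An edge incident with a degree-$2$ vertex is of type $(1,2),(2,2),(2,3)$ or $(2,4)$, and part $(ii)$ identifies $\Theta_{2,4}$ as the extreme value among these (smallest for $\alpha<0$, largest for $0<\alpha<1$); an edge at a degree-$3$ vertex is of type $(1,3),(2,3),(3,3)$ or $(3,4)$, with $\Theta_{3,4}$ extreme by part $(i)$. Counting the edges at $u,v$ as $\deg u+\deg v$ (or $\deg u+\deg v-1$ if $uv\in E(G)$, the shared edge then being of type $(2,2),(2,3)$ or $(3,3)$) and replacing each $\Theta$ by the relevant extreme value gives, after a short case split over $(\deg u,\deg v)\in\{(2,2),(2,3),(3,3)\}$ and over adjacency, the bounds $\Gamma'>2\Theta_{2,4}$ for $-1\le\alpha<0$ and $\Gamma'<2\Theta_{2,4}$ for $0<\alpha<1$. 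Most cases are immediate from $\Theta_{2,4}>0$, $\Theta_{2,2}>\Theta_{2,4}$, etc.; the only non-trivial ones are the two-degree-$3$ configurations, which reduce to the scalar inequalities $6\Theta_{3,4}>2\Theta_{2,4}$ and $\Theta_{3,3}+4\Theta_{3,4}>2\Theta_{2,4}$ (with reversed inequalities for $0<\alpha<1$), verifiable exactly as Lemma~\ref{Lem-K-01a} was.

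For $1<\alpha\le 2$ the ordering reverses: the least negative $\Theta$ at a degree-$3$ (resp. degree-$2$) vertex is now $\Theta_{1,3}$ (resp. $\Theta_{1,2}$), attained on edges to leaves, so the naive per-edge bound would permit a degree-$3$ vertex to contribute $3\Theta_{1,3}$, which is too large. The extra input needed is that $G$ is connected with $n\ge 5$: then a degree-$2$ (resp. degree-$3$) vertex has at most one (resp. two) neighbours of degree $1$, since otherwise $u$ together with its leaf neighbours would exhaust a $P_3$ (resp. $K_{1,3}$) component. Hence each degree-$2$ vertex carries at least one edge of type $(2,2),(2,3),(2,4)$ with $\Theta\le\Theta_{2,4}$, and each degree-$3$ vertex at least one edge of type $(2,3),(3,3),(3,4)$ with $\Theta\le\Theta_{3,4}$, capping the per-vertex contributions at $\Theta_{1,2}+\Theta_{2,4}$ and $2\Theta_{1,3}+\Theta_{3,4}$. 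The same case split then yields $\Gamma\le\Gamma'<2\Theta_{1,3}+\Theta_{3,4}$, the residual scalar inequalities being $2\Theta_{1,2}+2\Theta_{2,4}<2\Theta_{1,3}+\Theta_{3,4}$, $\Theta_{3,3}+2\Theta_{1,3}<\Theta_{3,4}$ and $\Theta_{2,3}+\Theta_{1,2}<\Theta_{3,4}$.

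I expect the $1<\alpha\le 2$ regime to be the main obstacle: here the crude per-edge bound is insufficient and one must invoke connectivity and $n\ge 5$ to limit the number of leaf-incident edges at each low-degree vertex, without which configurations such as two disjoint paths $P_3$ (giving $\Gamma=4\Theta_{1,2}$) would violate the stated inequality. The rest — the finite list of scalar $\Theta_{i,j}$-inequalities over the relevant $\alpha$-ranges, and the verification of strictness in the boundary cases — is routine and can be confirmed by the same computation used for Lemma~\ref{Lem-K-01a}.
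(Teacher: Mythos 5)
Your proposal is correct in substance and reaches the stated bounds, but it is organized genuinely differently from the paper's proof. The paper argues globally on the edge-type counts: it first disposes of the cases $x_{2,2}\ge1$, $x_{2,3}\ge1$, $x_{3,3}\ge1$ using Lemma~\ref{Lem-K-01a} together with the identity $x_{1,3}+x_{2,3}+2x_{3,3}+x_{3,4}=3n_3$, and then, in the residual case $x_{2,2}=x_{2,3}=x_{3,3}=0$, uses $x_{1,2}+x_{2,4}=2n_2$ and $x_{1,3}+x_{3,4}=3n_3$ and splits on $(n_2,n_3)$; you instead localize to two chosen low-degree vertices, discard every other edge by the common sign of the $\Theta_{i,j}$, and case-split on the degree pair and adjacency. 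Both routes funnel into a finite list of scalar $\Theta$-inequalities checkable exactly as in Lemma~\ref{Lem-K-01a}, and --- importantly --- both turn on the identical insight at the one delicate spot you correctly identified ($n_2\ge2$, $n_3=0$, $\alpha>\frac{43}{25}$, where the crude bound $4\Theta_{1,2}$ is too large): each degree-$2$ vertex has at most one pendant neighbour, so every $\Theta_{1,2}$ term can be paired with a $\Theta_{2,4}$ term. The paper encodes this as the counting inequality $x_{2,4}\ge x_{1,2}$, asserted without justification; you derive it from connectivity and $n\ge5$, and you are right that some such hypothesis is genuinely needed: the lemma as literally stated fails for disconnected molecular graphs (a graph containing two $P_3$ components has $\Gamma=4\Theta_{1,2}>2\Theta_{1,3}+\Theta_{3,4}$ for $\alpha$ near $2$), so making the chemically natural but unstated connectivity assumption explicit is an improvement on the paper, where it is used only silently. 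Two small caveats on your side: your per-vertex caps for $1<\alpha\le2$ rely on the orderings $\Theta_{2,2},\Theta_{2,3}\le\Theta_{2,4}$ and $\Theta_{2,3},\Theta_{3,3}\le\Theta_{3,4}$, which go slightly beyond what Lemma~\ref{Lem-K-01a} literally provides in that range (there it only compares against $\Theta_{1,2}$ and $\Theta_{1,3}$) and must be added to the list of verifications; and the adjacent degree-$(2,2)$ configuration needs the scalar check $\Theta_{2,2}+2\Theta_{1,2}<2\Theta_{1,3}+\Theta_{3,4}$, absent from your list --- both hold on $(1,2]$ and are of the same routine character, so these are omissions of bookkeeping, not of ideas.
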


\begin{proof}
First we consider when at least one of $x_{2,2}, x_{2,3}, x_{3,3}$ is positive.

If $x_{2,2}\ge1$, then from Eq. (\ref{Eq-K-00}) and verification by using Mathematica, we have
\[
\Gamma(G) \begin{cases}
\ge \Theta_{2,2} >  2\Theta_{2,4} & \text{for $-1\le\alpha<0$,}\\
\le \Theta_{2,2} < 2\Theta_{2,4} & \text{for $0<\alpha<1$,}\\
\le \Theta_{2,2} < 2\Theta_{1,3}+\Theta_{3,4} & \text{for $1<\alpha\le2$.}
\end{cases}
\]

Suppose that $x_{2,3}\ge1$. From Eq. (\ref{Eq-K-00}) and Lemma \ref{Lem-K-01a} (i), it follows that
\begin{eqnarray} \label{eq-add}
\Gamma(G) \begin{cases}
\ge x_{2,3} \Theta_{2,3} + (x_{1,3}+x_{3,3}+x_{3,4})\Theta_{3,4}  & \text{for $-1\le\alpha<0$,}\\
\le x_{2,3} \Theta_{2,3} + (x_{1,3}+x_{3,3}+x_{3,4})\Theta_{3,4}  & \text{for $0<\alpha<1$,}\\
\le x_{2,3} \Theta_{2,3} + (x_{1,3}+x_{3,3}+x_{3,4})\Theta_{1,3}  & \text{for $1<\alpha\le2$.}
\end{cases}
\end{eqnarray}
From Eq. (\ref{Eq557b}) with $j = 3$, we have
$$
x_{1,3} + x_{2,3} + 2x_{3,3} + x_{3,4} = 3 n_3.
$$
Note that $n_3 \ge 1$. So there are two cases need to be considered:
\begin{itemize}
\item
$x_{2,3} = 1$ or $2$, and $x_{1,3} + x_{3,3} + x_{3,4} \ge 1$;
\item
$x_{2,3} \ge 3$.
\end{itemize}
If $x_{2,3} = 1$ or $2$, and $x_{1,3} + x_{3,3} + x_{3,4} \ge 1$, then by (\ref{eq-add}) and Mathematica,
\[
\Gamma(G) \begin{cases}
\ge   \Theta_{2,3}+ \Theta_{3,4} >  2\Theta_{2,4} & \text{for $-1\le\alpha<0$,}\\
\le  \Theta_{2,3}+ \Theta_{3,4} < 2\Theta_{2,4}  & \text{for $0<\alpha<1$,}\\
\le  \Theta_{2,3}+ \Theta_{1,3} < 2\Theta_{1,3}+\Theta_{3,4} & \text{for $1<\alpha\le2$.}
\end{cases}
\]
If $x_{2,3} \ge 3$, then by (\ref{eq-add}) and Mathematica,
\[
\Gamma(G) \begin{cases}
\ge 3\Theta_{2,3} >  2\Theta_{2,4} & \text{for $-1\le\alpha<0$,}\\
\le 3\Theta_{2,3} <  2\Theta_{2,4}  & \text{for $0<\alpha<1$,}\\
\le 3\Theta_{2,3} < 2\Theta_{1,3}+\Theta_{3,4} & \text{for $1<\alpha\le2$.}
\end{cases}
\]

% then either $x_{1,3}+x_{3,3}+x_{3,4}\ge1$ or $x_{1,3}+x_{3,3}+x_{3,4}=0$. If $x_{1,3}+x_{3,3}+x_{3,4}\ge1$ then from Eq. (\ref{Eq-K-00}) and Lemma \ref{Lem-K-01a}, it follows that
%\[
%\Gamma(G) \begin{cases}
%\ge \Theta_{2,3} + (x_{1,3}+x_{3,3}+x_{3,4})\Theta_{3,4} \ge \Theta_{2,3}+ \Theta_{3,4} >  2\Theta_{2,4} & \text{for $-1\le\alpha<0$,}\\
%\le \Theta_{2,3} + (x_{1,3}+x_{3,3}+x_{3,4})\Theta_{3,4} \le \Theta_{2,3}+ \Theta_{3,4} < 2\Theta_{2,4}  & \text{for $0<\alpha<1$,}\\
%\le \Theta_{2,3} + (x_{1,3}+x_{3,3}+x_{3,4})\Theta_{1,3} \le \Theta_{2,3}+ \Theta_{1,3} < 2\Theta_{1,3}+\Theta_{3,4} & \text{for $1<\alpha\le2$.}
%\end{cases}
%\]
%
%
%
%If $x_{1,3}+x_{3,3}+x_{3,4}=0$ then $x_{1,3}=x_{3,3}=x_{3,4}=0$ and hence $x_{2,3}=3n_3\ge3$, which together with Eq. (\ref{Eq-K-00}) implies that
%\[
%\Gamma(G) \begin{cases}
%\ge 3\Theta_{2,3} >  2\Theta_{2,4} & \text{for $-1\le\alpha<0$,}\\
%\le 3\Theta_{2,3} <  2\Theta_{2,4}  & \text{for $0<\alpha<1$,}\\
%\le 3\Theta_{2,3} < 2\Theta_{1,3}+\Theta_{3,4} & \text{for $1<\alpha\le2$.}
%\end{cases}
%\]

Suppose that $x_{3,3}\ge1$.
From Eq. (\ref{Eq-K-00}) and Lemma \ref{Lem-K-01a} (i), it follows that
\begin{eqnarray} \label{eq-add1}
\Gamma(G) \begin{cases}
\ge x_{3,3} \Theta_{3,3} + (x_{1,3}+x_{2,3}+x_{3,4})\Theta_{3,4}  & \text{for $-1\le\alpha<0$,}\\
\le x_{3,3} \Theta_{3,3} + (x_{1,3}+x_{2,3}+x_{3,4})\Theta_{3,4}  & \text{for $0<\alpha<1$,}\\
\le x_{3,3} \Theta_{3,3} + (x_{1,3}+x_{2,3}+x_{3,4})\Theta_{1,3}  & \text{for $1<\alpha\le2$.}
\end{cases}
\end{eqnarray}
From Eq. (\ref{Eq557b}) with $j = 3$, we have
$$
x_{1,3} + x_{2,3} + 2x_{3,3} + x_{3,4} = 3 n_3.
$$
Note that $n_3 \ge 2$. So there are two cases need to be considered:
\begin{itemize}
\item
$x_{3,3} = 1$ or $2$, and $x_{1,3} + x_{2,3} + x_{3,4} > 1$;
\item
$x_{3,3} \ge 3$.
\end{itemize}
If $x_{3,3} = 1$ or $2$, and $x_{1,3} + x_{2,3} + x_{3,4} > 1$, then by (\ref{eq-add1}) and Mathematica,
\[
\Gamma(G) \begin{cases}
> \Theta_{3,3}+ \Theta_{3,4} >  2\Theta_{2,4} & \text{for $-1\le\alpha<0$,}\\
< \Theta_{3,3}+ \Theta_{3,4} < 2\Theta_{2,4}  & \text{for $0<\alpha<1$,}\\
<  \Theta_{3,3}+ \Theta_{1,3} < 2\Theta_{1,3}+\Theta_{3,4} & \text{for $1<\alpha\le2$.}
\end{cases}
\]
If $x_{3,3} \ge 3$, then by (\ref{eq-add1}) and Mathematica,
\[
\Gamma(G) \begin{cases}
\ge 3\Theta_{3,3} >  2\Theta_{2,4} & \text{for $-1\le\alpha<0$,}\\
\le 3\Theta_{3,3} <  2\Theta_{2,4}  & \text{for $0<\alpha<1$,}\\
\le 3\Theta_{3,3} < 2\Theta_{1,3}+\Theta_{3,4} & \text{for $1<\alpha\le2$.}
\end{cases}
\]

%, then either $x_{1,3}+x_{2,3}+x_{3,4}\ge1$ or $x_{1,3}+x_{2,3}+x_{3,4}=0$. If $x_{1,3}+x_{2,3}+x_{3,4}\ge1$ then from Eq. (\ref{Eq-K-00}) and Lemma \ref{Lem-K-01a}, it follows that
%\[
%\Gamma(G) \begin{cases}
%\ge \Theta_{3,3}+ (x_{1,3}+x_{2,3}+x_{3,4})\Theta_{3,4} \ge \Theta_{3,3}+ \Theta_{3,4} >  2\Theta_{2,4} & \text{for $-1\le\alpha<0$,}\\
%\le \Theta_{3,3}+ (x_{1,3}+x_{2,3}+x_{3,4})\Theta_{3,4} \le \Theta_{3,3}+ \Theta_{3,4} < 2\Theta_{2,4}  & \text{for $0<\alpha<1$,}\\
%\le \Theta_{3,3}+ (x_{1,3}+x_{2,3}+x_{3,4})\Theta_{1,3} \le \Theta_{3,3}+ \Theta_{1,3} < 2\Theta_{1,3}+\Theta_{3,4} & \text{for $1<\alpha\le2$.}
%\end{cases}
%\]
%
%If $x_{1,3}+x_{2,3}+x_{3,4}=0$ then $x_{1,3}=x_{2,3}=x_{3,4}=0$ and hence $x_{3,3}=\frac{3}{2}n_3\ge3$ (because $n_3\ge2$), which together with Eq. (\ref{Eq-K-00}) implies that
%\[
%\Gamma(G) \begin{cases}
%\ge 3\Theta_{3,3} >  2\Theta_{2,4} & \text{for $-1\le\alpha<0$,}\\
%\le 3\Theta_{3,3} <  2\Theta_{2,4}  & \text{for $0<\alpha<1$,}\\
%\le 3\Theta_{3,3} < 2\Theta_{1,3}+\Theta_{3,4} & \text{for $1<\alpha\le2$.}
%\end{cases}
%\]

In what follows, we assume that $x_{2,2}=x_{2,3}=x_{3,3}=0$. Then Eq. (\ref{Eq-K-00}) becomes
\begin{equation}\label{Ineq-lem-a}
\Gamma(G) = x_{1,2} \Theta_{1,2}+ x_{1,3}\Theta_{1,3}+ x_{2,4} \Theta_{2,4} + x_{3,4} \Theta_{3,4}.
\end{equation}
And from Eq. (\ref{Eq557b}), we know that
$x_{1,2}+ x_{2,4} = 2 n_2$ and $x_{1,3} + x_{3,4} = 3 n_3$.

%Clearly, at least one of $x_{2,4}$ and $x_{3,4}$ is positive.
Now by using Lemma \ref{Lem-K-01a} (i) and (ii) in Eq. (\ref{Ineq-lem-a}), we get
\begin{equation*}
\Gamma(G) \begin{cases}
\ge (x_{1,2}+ x_{2,4})\Theta_{2,4} + (x_{1,3} + x_{3,4})\Theta_{3,4}  & \text{for $-1\le\alpha<0$,}\\
\le (x_{1,2}+ x_{2,4})\Theta_{2,4} + (x_{1,3} + x_{3,4})\Theta_{3,4}  & \text{for $0<\alpha<1$,}\\
\le (x_{1,2}+ x_{2,4})\Theta_{1,2} + (x_{1,3} + x_{3,4})\Theta_{1,3}  & \text{for $1<\alpha\le2$,}
\end{cases}
\end{equation*}
i.e.,
\begin{equation}\label{Ineq-lem}
\Gamma(G) \begin{cases}
\ge 2 n_2\Theta_{2,4} + 3 n_3\Theta_{3,4}  & \text{for $-1\le\alpha<0$,}\\
\le 2 n_2\Theta_{2,4} + 3 n_3\Theta_{3,4}  & \text{for $0<\alpha<1$,}\\
\le 2 n_2\Theta_{1,2} + 3 n_3\Theta_{1,3}  & \text{for $1<\alpha\le2$.}
\end{cases}
\end{equation}

%\noindent
%\textit{Case 1.} $n_2\ge1$ and $n_3\ge1$.

If $n_2\ge1$ and $n_3\ge1$, then by (\ref{Ineq-lem}) and Mathematica, we get
\[
\Gamma(G) \begin{cases}
\ge 2\Theta_{2,4} + 3\Theta_{3,4} > 2\Theta_{2,4}  & \text{for $-1\le\alpha<0$,}\\
\le 2\Theta_{2,4} + 3\Theta_{3,4} < 2\Theta_{2,4}  & \text{for $0<\alpha<1$,}\\
\le 2\Theta_{1,2} + 3\Theta_{1,3} \le  2\Theta_{1,3}+\Theta_{3,4}  & \text{for $1<\alpha\le2$.}
\end{cases}
\]

%\noindent
%\textit{Case 2.} $n_2=0$ and $n_3 \ge 2$.

%Because of the assumption $n_2+n_3 \ge 2$, we have $n_3\ge2$ and hence $x_{1,3} + x_{3,4}=3n_3\ge6$. From (\ref{Ineq-lem}), it follows that

If $n_2=0$ and $n_3 \ge 2$ (note that $n_2 + n_3 \ge 2$ from hypothesis), then by (\ref{Ineq-lem}) and Mathematica, we get
\[
\Gamma(G) \begin{cases}
\ge 6\Theta_{3,4} > 2\Theta_{2,4}  & \text{for $-1\le\alpha<0$,}\\
\le 6\Theta_{3,4} < 2\Theta_{2,4}  & \text{for $0<\alpha<1$,}\\
\le 6\Theta_{1,3} <  2\Theta_{1,3}+\Theta_{3,4}  & \text{for $1<\alpha\le2$.}
\end{cases}
\]

%\noindent
%\textit{Case 3.} $n_2 \ge 2$ and $n_3=0$.
%
%
%It holds that $n_2\ge2$ and hence $x_{1,2} + x_{2,4}=2n_2\ge4$, which together with (\ref{Ineq-lem}) imply that

If $n_2 \ge 2$ and $n_3=0$ (note that $n_2 + n_3 \ge 2$ from hypothesis), then by (\ref{Ineq-lem}) and Mathematica, we get
\[
\Gamma(G) \begin{cases}
\ge 4\Theta_{2,4} > 2\Theta_{2,4}  & \text{for $-1\le\alpha<0$,}\\
\le 4\Theta_{2,4} < 2\Theta_{2,4}  & \text{for $0<\alpha<1$.}
\end{cases}
\]
In particular, we note that the inequality $\Gamma(G)\le 4\Theta_{1,2} <  2\Theta_{1,3}+\Theta_{3,4}$ does not hold for $\alpha>\frac{43}{25}=1.72$. So we will prove the inequality $\Gamma(G)<2\Theta_{1,3}+\Theta_{3,4}$, for $1<\alpha\le2$, by another stricter way.

In the remaining proof, we assume that $n_2 \ge 2$, $n_3=0$, and $1<\alpha\le2$. From Eq. (\ref{Ineq-lem-a}), we get
\begin{equation}\label{Ineq-lem-b}
\Gamma(G)= x_{1,2}\Theta_{1,2} + x_{2,4}\Theta_{2,4}.
\end{equation}
Recall that $x_{1,2}+ x_{2,4} = 2 n_2 \ge 4$.
If $x_{1,2}=0$, then $x_{2,4} \ge 4$, and from Eq. (\ref{Ineq-lem-b}), it follows that $\Gamma(G)\le4\Theta_{2,4}<2\Theta_{1,3}+\Theta_{3,4}$.
If $x_{1,2}=1$, then $x_{2,4}\ge3$, and hence from Eq. (\ref{Ineq-lem-b}), we have $\Gamma(G)\le \Theta_{1,2}+ 3\Theta_{2,4}<2\Theta_{1,3}+\Theta_{3,4}$.
If $x_{1,2}\ge2$, then noting that $x_{2,4}\ge x_{1,2}$, i.e., $x_{2,4}\ge 2$, which together with Eq. (\ref{Ineq-lem-b}) and Mathematica imply that
\[
\Gamma(G)\le 2\Theta_{1,2}+2\Theta_{2,4}<2\Theta_{1,3}+\Theta_{3,4}.
\]

The proof is completed.
\end{proof}

\begin{thm}\label{cor44}
%Consider the family of all molecular $(n,m)$-graphs, where $n-1\le m \le 2n$ and $n\ge5$.
Let $G$ be a molecular $(n,m)$-graph, where $n-1\le m \le 2n$ and $n\ge5$.
\begin{enumerate}
\item[$(i)$]
Suppose that $m+n\equiv0$ (mod  3).
%For $-1\le\alpha<0$,
%$$
%\chi_\al(G) \ge \frac{4}{3}(5^\al-8^\al)n-\frac{1}{3}(2\cdot 5^\al-5\cdot 8^\al)m,
%$$
%and for $0<\alpha<1$ and $1<\alpha\le2$,
%$$
%\chi_\al(G) \le \frac{4}{3}(5^\al-8^\al)n-\frac{1}{3}(2\cdot 5^\al-5\cdot 8^\al)m,
%$$
Then
\[
\chi_\al(G) \begin{cases}
\ge \frac{4}{3}(5^\al-8^\al)n-\frac{1}{3}(2\cdot 5^\al-5\cdot 8^\al)m & \text{for $-1\le\alpha<0$,}\\
\le \frac{4}{3}(5^\al-8^\al)n-\frac{1}{3}(2\cdot 5^\al-5\cdot 8^\al)m & \text{for $0<\alpha<1$ and $1<\alpha\le2$,}
\end{cases}
\]
with either equality if and only if $G$ contains no vertices of degrees $2$ and $3$ (i.e., $n_2 = 0$ and $n_3 = 0$).
\item[$(ii)$]
Suppose that $m+n\equiv1$ (mod  3).
%For $-1\le\alpha<0$,
%$$
%\chi_\al(G) \ge \frac{4}{3}(5^\al-8^\al)n-\frac{1}{3}(2\cdot 5^\al-5\cdot 8^\al)m + 3\cdot7^\al -\frac{1}{3}\cdot 5^\al -\frac{8}{3}\cdot 8^\al,
%$$
%and for $0<\alpha<1$,
%$$
%\chi_\al(G) \le \frac{4}{3}(5^\al-8^\al)n-\frac{1}{3}(2\cdot 5^\al-5\cdot 8^\al)m + 3\cdot7^\al -\frac{1}{3}\cdot 5^\al -\frac{8}{3}\cdot 8^\al,
%$$
For $-1\le\alpha<0$ and $0<\alpha<1$,
\[
\chi_\al(G) \begin{cases}
\ge \frac{4}{3}(5^\al-8^\al)n-\frac{1}{3}(2\cdot 5^\al-5\cdot 8^\al)m + 3\cdot7^\al -\frac{1}{3}\cdot 5^\al -\frac{8}{3}\cdot 8^\al & \text{for $-1\le\alpha<0$,}\\
\le \frac{4}{3}(5^\al-8^\al)n-\frac{1}{3}(2\cdot 5^\al-5\cdot 8^\al)m + 3\cdot7^\al -\frac{1}{3}\cdot 5^\al -\frac{8}{3}\cdot 8^\al & \text{for $0<\alpha<1$,}
\end{cases}
\]
with either equality if and only if $G$ contains no vertex of degree $2$ and exactly one vertex of degree $3$ (i.e., $n_2 = 0$ and $n_3 = 1$), which is adjacent to three vertices of degree $4$ (i.e., $x_{1,3} = 0$ and $x_{3,4} = 3$).
For $1<\alpha\le2$,
$$
\chi_\al(G) \le \frac{4}{3}(5^\al-8^\al)n-\frac{1}{3}(2\cdot 5^\al-5\cdot 8^\al)m + 2\cdot4^\al + 7^\al - \frac{7}{3}\cdot5^\al - \frac{2}{3}\cdot 8^\al,
$$
with equality if and only if $G$ contains no vertex of degree $2$ and exactly one vertex of degree $3$ (i.e., $n_2 = 0$ and $n_3 = 1$), which is adjacent to two vertices of degree $1$ and one vertex of degree $4$ (i.e., $x_{1,3} = 2$ and $x_{3,4} = 1$).
\item[$(iii)$]
Suppose that $m+n\equiv2$ (mod  3).
%For $-1\le\alpha<0$,
%$$
%\chi_\al(G) \ge \frac{4}{3}(5^\al-8^\al)n-\frac{1}{3}(2\cdot 5^\al-5\cdot 8^\al)m + 2\cdot6^\al -\frac{2}{3}\cdot 5^\al -\frac{4}{3}\cdot 8^\al,
%$$
%and for $0<\alpha<1$,
%$$
%Pl_\al(G) \le \frac{4}{3}(5^\al-8^\al)n-\frac{1}{3}(2\cdot 5^\al-5\cdot 8^\al)m + 2\cdot6^\al -\frac{2}{3}\cdot 5^\al -\frac{4}{3}\cdot 8^\al,
%$$
For $-1\le\alpha<0$ and $0<\alpha<1$,
\[
\chi_\al(G) \begin{cases}
\ge \frac{4}{3}(5^\al-8^\al)n-\frac{1}{3}(2\cdot 5^\al-5\cdot 8^\al)m + 2\cdot6^\al -\frac{2}{3}\cdot 5^\al -\frac{4}{3}\cdot 8^\al & \text{for $-1\le\alpha<0$,}\\
\le \frac{4}{3}(5^\al-8^\al)n-\frac{1}{3}(2\cdot 5^\al-5\cdot 8^\al)m + 2\cdot6^\al -\frac{2}{3}\cdot 5^\al -\frac{4}{3}\cdot 8^\al & \text{for $0<\alpha<1$,}
\end{cases}
\]
with either equality if and only if $G$ contains no vertex of degree $3$ and exactly one vertex of degree $2$ (i.e., $n_3 = 0$ and $n_2 = 1$), which is adjacent to two vertices of degree $4$ (i.e., $x_{1,2} = 0$ and $x_{2,4} = 2$).
For $1<\alpha\le2$,
$$
\chi_\al(G) \le \frac{4}{3}(5^\al-8^\al)n-\frac{1}{3}(2\cdot 5^\al-5\cdot 8^\al)m + 3^\al + 6^\al - \frac{5}{3}\cdot5^\al - \frac{1}{3}\cdot 8^\al,
$$
with equality if and only if $G$ contains no vertex of degree $3$ and exactly one vertex of degree $2$ (i.e., $n_3 = 0$ and $n_2 = 1$), which is adjacent to one vertex of degree $1$ and one vertex of degree $4$ (i.e., $x_{1,2} = 1$ and $x_{2,4} = 1$).
\end{enumerate}
\end{thm}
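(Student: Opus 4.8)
The plan is to reduce all three parts to estimating $\Gamma(G)$ through the exact identity (\ref{Eq7}), since the linear expression $\frac{4}{3}(5^\al-8^\al)n-\frac{1}{3}(2\cdot 5^\al-5\cdot 8^\al)m$ is common to every bound. The organizing observation is that subtracting (\ref{Eq555b}) from (\ref{Eq556b}) gives $2m-n=n_2+2n_3+3n_4$, whence $m+n\equiv n_3-n_2\pmod 3$. Thus the residue of $m+n$ modulo $3$ controls the admissible small profiles of $(n_2,n_3)$: residue $0$ is the only one compatible with $n_2=n_3=0$; residue $1$ forces $n_2+n_3\ge1$ with unique minimal profile $(n_2,n_3)=(0,1)$; residue $2$ forces $n_2+n_3\ge1$ with minimal profile $(n_2,n_3)=(1,0)$. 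In each class I would dispose of the configurations with $n_2+n_3\ge2$ by Lemma \ref{Lem-K-01} and treat the minimal profiles by hand.

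For part (i): if $n_2=n_3=0$ then every middle edge-count vanishes, so $\Gamma(G)=0$ and (\ref{Eq7}) gives equality. Otherwise, since residue $0$ is incompatible with $n_2+n_3=1$, we have $n_2+n_3\ge2$, and Lemma \ref{Lem-K-01} combined with the sign facts $2\Theta_{2,4}>0$ (for $-1\le\al<0$) and $2\Theta_{2,4}<0$, $2\Theta_{1,3}+\Theta_{3,4}<0$ (for $0<\al\le2$) places $\Gamma(G)$ strictly on the required side of $0$; substituting into (\ref{Eq7}) yields the strict inequalities and the equality characterization.

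For parts (ii) and (iii) I would split each into the minimal profile and the rest. When $n_2+n_3\ge2$, Lemma \ref{Lem-K-01} bounds $\Gamma(G)$ by $2\Theta_{2,4}$ (resp.\ $2\Theta_{1,3}+\Theta_{3,4}$); in several subcases this threshold already equals the claimed extremal value (e.g.\ $2\Theta_{2,4}$ is exactly the part (iii) bound for $\al<0$ and $0<\al<1$, and $2\Theta_{1,3}+\Theta_{3,4}$ is exactly the part (ii) bound for $1<\al\le2$), and in the remaining subcases one checks by Mathematica the comparisons $2\Theta_{2,4}\gtrless3\Theta_{3,4}$ and $2\Theta_{1,3}+\Theta_{3,4}<\Theta_{1,2}+\Theta_{2,4}$, so these configurations lie strictly beyond the extremum. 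For the minimal profiles I would use (\ref{Eq557b}): with $(n_2,n_3)=(0,1)$ it gives $x_{2,3}=x_{3,3}=0$ and $x_{1,3}+x_{3,4}=3$, so $\Gamma(G)=x_{1,3}\Theta_{1,3}+x_{3,4}\Theta_{3,4}$; with $(n_2,n_3)=(1,0)$ it gives $x_{2,2}=x_{2,3}=0$ and $x_{1,2}+x_{2,4}=2$, so $\Gamma(G)=x_{1,2}\Theta_{1,2}+x_{2,4}\Theta_{2,4}$. Each is now a single free count to be optimised over its admissible range using the orderings of the $\Theta$'s from Lemma \ref{Lem-K-01a}, which pins down the extremal split and hence the equality profile.

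The main obstacle is the range $1<\al\le2$, where the unconstrained optimum would place all weight on the single largest quantity $\Theta_{1,3}$ (resp.\ $\Theta_{1,2}$), predicting the edge-counts $x_{1,3}=3$ (resp.\ $x_{1,2}=2$) that would violate the stated bounds. These must be excluded by a connectivity argument, using that $G$ is a connected molecular graph on $n\ge5$ vertices: a degree-$3$ vertex adjacent to three degree-$1$ vertices would, together with those leaves, form an isolated $K_{1,3}$ and force $n=4$, so $x_{1,3}\le2$; similarly a degree-$2$ vertex adjacent to two leaves would form an isolated path on three vertices, forcing $n=3$, so $x_{1,2}\le1$. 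Imposing $x_{1,3}\le2$ then gives the optimum at $(x_{1,3},x_{3,4})=(2,1)$, i.e.\ $\Gamma(G)=2\Theta_{1,3}+\Theta_{3,4}$, and $x_{1,2}\le1$ gives the optimum at $(x_{1,2},x_{2,4})=(1,1)$, i.e.\ $\Gamma(G)=\Theta_{1,2}+\Theta_{2,4}$, matching the stated expressions. Finally, substituting each extremal profile into (\ref{Eq7}) verifies the equality values, and exhibiting one realising graph for each admissible $(n,m)$ confirms that the bounds are best possible.
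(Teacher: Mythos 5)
Your proposal is correct and follows essentially the same route as the paper's own proof: the reduction via Eq.~(\ref{Eq7}), the congruence $m+n\equiv n_3-n_2 \pmod 3$ isolating the minimal profiles $(n_2,n_3)\in\{(0,0),(0,1),(1,0)\}$, Lemma \ref{Lem-K-01} (with the Mathematica comparisons $2\Theta_{2,4}\gtrless 3\Theta_{3,4}$ and $2\Theta_{1,3}+\Theta_{3,4}<\Theta_{1,2}+\Theta_{2,4}$) for the case $n_2+n_3\ge2$, and the orderings from Lemma \ref{Lem-K-01a} to optimize $x_{1,3}\Theta_{1,3}+x_{3,4}\Theta_{3,4}$ and $x_{1,2}\Theta_{1,2}+x_{2,4}\Theta_{2,4}$ over the minimal profiles. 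Your explicit connectivity argument ruling out $x_{1,3}=3$ and $x_{1,2}=2$ when $1<\al\le2$ is in fact an improvement on the paper, whose proof simply asserts $\Gamma(G)\le 2\Theta_{1,3}+\Theta_{3,4}$ (resp.\ $\Gamma(G)\le\Theta_{1,2}+\Theta_{2,4}$) without explaining why the formally larger values $3\Theta_{1,3}$ (resp.\ $2\Theta_{1,2}$) cannot occur — a step that genuinely requires connectedness together with $n\ge5$, exactly as you argue.
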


\begin{proof}
From Eqs. (\ref{Eq555b}) and (\ref{Eq556b}), the following congruence (see also \cite{Gutman2000}) follows:
\begin{equation}\label{Eq-K-01}
m+n\equiv n_3 - n_2 \text{ \ (mod  3)}.
\end{equation}

\textit{Case 1.} $m+n\equiv0$ (mod  3).

%From (\ref{Eq-K-01}), it follows that $n_3- n_2 = 3t_3$ for some integer $t_3$, which implies that either $n_3= n_2=0$ or $n_3 + n_2\ge2$.

%First suppose that $n_2 + n_3\le1$. From (\ref{Eq-K-01}), there is only one condition: $n_2=0$ and $n_3=0$. Now
%$\Gamma(G)=0$.

%If $n_2 + n_3 \ge2$, then by Lemma \ref{Lem-K-01}, the quantity $\Gamma(G)$ is different from $0$ and hence from Eq. (\ref{Eq7}), the desired result follows.

The desired result follows from Theorem \ref{thm-bd}.

\textit{Case 2.} $m+n\equiv1$ (mod  3).

%In this case, by using (\ref{Eq-K-01}), we have $n_3- n_2 = 3t_1 +1$ for some integer $t_1$.

%, which implies that either (a) $n_2=0, n_3=1$ or (b) $n_2 + n_3\ge2$.

First suppose that $n_2 + n_3\le1$. From (\ref{Eq-K-01}), there is only one condition: $n_2=0$ and $n_3=1$. Now
$$
\Gamma(G)= x_{1,3}\Theta_{1,3} + x_{3,4}\Theta_{3,4}
$$
and
$$
x_{1,3} + x_{3,4} = 3 n_3 = 3.
$$
From Lemma \ref{Lem-K-01a} (i), it follows that
\[
\Theta_{1,3} -\Theta_{3,4}
\begin{cases}
> 0  & \text{for $-1\le\alpha<0$ and $1<\alpha\le2$,}\\
< 0  & \text{for $0<\alpha<1$,}
\end{cases}
\]
which implies that
\[
\Gamma(G) \begin{cases}
\ge 3\Theta_{3,4}  & \text{for $-1\le\alpha<0$,}\\
\le 3\Theta_{3,4}  & \text{for $0<\alpha<1$,}\\
\le 2\Theta_{1,3}+\Theta_{3,4}  & \text{for $1<\alpha\le2$.}
\end{cases}
\]
The equality $\Gamma(G)=3\Theta_{3,4}$, for $-1\le\alpha<0$ and $0<\alpha<1$, holds if and only if the unique vertex of degree 3 is adjacent to three vertices of degree 4. The equality $\Gamma(G)=2\Theta_{1,3}+\Theta_{3,4}$, for $1<\alpha\le2$, holds if and only if the unique vertex of degree 3 is adjacent to two vertices of degree $1$ and one vertex of degree 4.

If $n_2 + n_3\ge2$, then from Lemma \ref{Lem-K-01} and Mathematica, it follows that
\[
\Gamma(G) \begin{cases}
> 2\Theta_{2,4} > 3\Theta_{3,4}  & \text{for $-1\le\alpha<0$,}\\
< 2\Theta_{2,4} < 3\Theta_{3,4}  & \text{for $0<\alpha<1$,}\\
< 2\Theta_{1,3}+\Theta_{3,4}  & \text{for $1<\alpha\le2$.}
\end{cases}
\]

Now combining the conclusions of both cases and using Eq. (\ref{Eq7}), we get the desired result.

\textit{Case 3.} $m+n\equiv2$ (mod  3).

%Again, from (\ref{Eq-K-01}), it follows that $n_3- n_2 = 3t_2 +2$ for some integer $t_2$, which implies that either (a) $n_2=1, n_3= 0$ or (b) $n_2 + n_3\ge2$.

%\textit{Case (a).} $n_2=1,n_3=0$.

First suppose that $n_2 + n_3\le1$. From (\ref{Eq-K-01}), there is only one condition: $n_2=1$ and $n_3=0$. Now
$$
\Gamma(G)= x_{1,2}\Theta_{1,2} + x_{2,4}\Theta_{2,4}
$$
and
$$
x_{1,2} + x_{2,4} = 2 n_2 = 2.
$$
From Lemma \ref{Lem-K-01a} (ii), it follows that
\[
\Theta_{1,2} -\Theta_{2,4}
\begin{cases}
> 0  & \text{for $-1\le\alpha<0$ and $1<\alpha\le2$,}\\
< 0  & \text{for $0<\alpha<1$,}
\end{cases}
\]
which implies that
\[
\Gamma(G) \begin{cases}
\ge 2\Theta_{2,4}  & \text{for $-1\le\alpha<0$,}\\
\le 2\Theta_{2,4}  & \text{for $0<\alpha<1$,}\\
\le \Theta_{1,2}+\Theta_{2,4}  & \text{for $1<\alpha\le2$.}
\end{cases}
\]
The equality $\Gamma(G)=2\Theta_{2,4}$, for $-1\le\alpha<0$ and $0<\alpha<1$, holds if and only if the unique vertex of degree 2 is adjacent to two vertices of degree 4. The equality $\Gamma(G)=\Theta_{1,2}+\Theta_{2,4}$, for $1<\alpha\le2$, holds if and only if the unique vertex of degree 2 is adjacent to one vertex of degree $1$ and one vertex of degree 4.

%\textit{Case (b).} $n_2 + n_3\ge2$.

If $n_2 + n_3\ge2$, then from Lemma \ref{Lem-K-01} and Mathematica, we have
\[
\Gamma(G) \begin{cases}
> 2\Theta_{2,4}  & \text{for $-1\le\alpha<0$,}\\
< 2\Theta_{2,4}  & \text{for $0<\alpha<1$,}\\
< 2\Theta_{1,3}+\Theta_{3,4} < \Theta_{1,2}+\Theta_{2,4}    & \text{for $1<\alpha\le2$.}
\end{cases}
\]

Again, after combining the conclusions of both cases and using Eq. (\ref{Eq7}), we get the required result.

Combining the above three cases, the result follows.
\end{proof}

As illustrations of Theorem \ref{cor44}, we list some molecular $(n,m)$-graphs, in particular when $m = n - 1, n, n+1$, whose $\chi_\al$ values attain the bounds established in Theorem \ref{cor44}, see Figure \ref{fig:5}.

\begin{figure}[ht]
	\centering
	\includegraphics[width=1\textwidth]{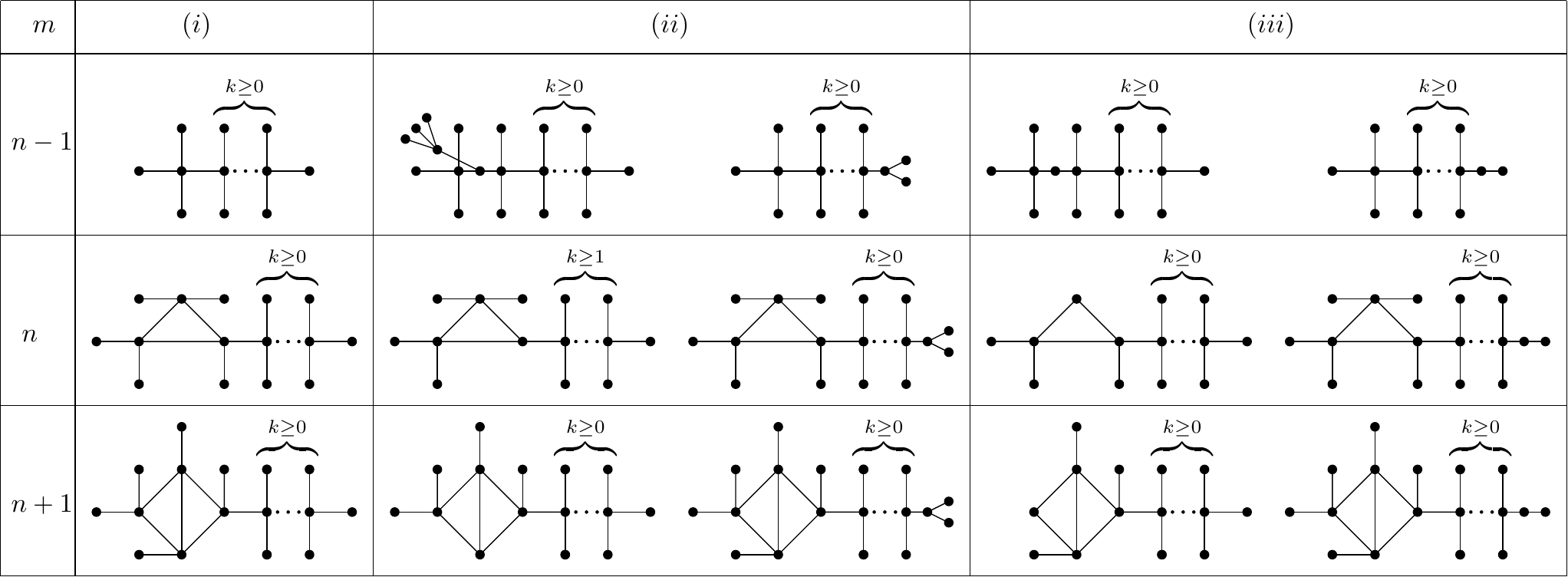}
	\caption{Some examples of the extremal $(n,m)$-graphs, specified in Theorem \ref{cor44}, when $m=n-1,n,n+1$.}
  \label{fig:5}	
  \end{figure}

\begin{rmk}
In case of trees, it holds that $m=n-1$ and hence Theorem \ref{cor44} generalizes a result of \cite{Zhou09} concerning sum-connectivity index $\chi_{-1/2}$ as well as a result of \cite{Liu14} concerning harmonic index $2\chi_{-1}$. Theorem \ref{cor44} also gives the sharp upper bound of the hyper Zagreb index $\chi_2$ for molecular $(n,m)$-graphs.
\end{rmk}

Similar to Eq. (\ref{Eq557bc}), we have the following equation concerning the general Platt index $Pl_{\alpha}$:
\begin{eqnarray}\label{Eq557bc-Pl}
Pl_\al(G)&=& \frac{4}{3}(3^\al-6^\al)n-\frac{1}{3}(2\cdot 3^\al-5\cdot 6^\al)m  + x_{1,2}\Theta'_{1,2}
+x_{1,3}\Theta'_{1,3} + x_{2,2}\Theta'_{2,2}\nonumber \\
&& + x_{2,3}\Theta'_{2,3} +x_{2,4}\Theta'_{2,4}+x_{3,3}\Theta'_{3,3}+x_{3,4}\Theta'_{3,4},
\end{eqnarray}
where
$$
\Theta'_{1,2}= 1 -\frac{4}{3}\cdot 3^\al +\frac{1}{3}\cdot 6^\al, \ \
\Theta'_{1,3} = 2^\al -\frac{10}{9}\cdot 3^\al +\frac{1}{9}\cdot 6^\al,
$$
$$
\Theta'_{2,2}= 2^\al -\frac{2}{3}\cdot 3^\al -\frac{1}{3}\cdot 6^\al, \ \
\Theta'_{2,3}= \frac{5}{9} \left( 3^\al - 6^\al \right),
$$
$$
\Theta'_{2,4}= 4^\al -\frac{1}{3}\cdot 3^\al -\frac{2}{3}\cdot 6^\al, \ \
\Theta'_{3,3}= 4^\al -\frac{2}{9}\cdot 3^\al -\frac{7}{9}\cdot 6^\al,
$$
$$
\Theta'_{3,4}= 5^\al -\frac{1}{9}\cdot 3^\al -\frac{8}{9}\cdot 6^\al.
$$

It can be easily checked that $ \Theta'_{1,3}, \Theta'_{2,2}, \Theta'_{2,3}, \Theta'_{2,4}, \Theta'_{3,3}$ and $\Theta'_{3,4}$ are all positive for $-1\le \al <0$ and all negative for $0< \al \le 2$. The case about $\Theta'_{1,2}$ is somewhat different. In $[-1,0) \cup (0,2]$, there is a unique root of the equation $\Theta'_{1,2}=0$ on $\al$, which is $x_0\approx1.8509$. More precisely,
$\Theta'_{1,2}$ is positive for $-1\le \al <0$ and $x_0 < \al \le 2$, and negative for $0< \al < x_0$.
Instead, $\Theta'_{1,2}+\Theta'_{2,2}$, $\Theta'_{1,2}+\Theta'_{2,3}$ and $\Theta'_{1,2}+\Theta'_{2,4}$ are all negative for $x_0 \le \al \le 2$.

%Also, $\Theta'_{1,2}$ is positive for $-1\le \al <0$ and negative for $0< \al \le 1$, while when $1 < \al \le 2$, . Furthermore, $\Theta'_{1,2}+\Theta'_{2,2}$, $\Theta'_{1,2}+\Theta'_{2,3}$ and $\Theta'_{1,2}+\Theta'_{2,4}$ are all negative for $1<\al\le 2$.

It is clear that the inequality $x_{1,2} \le x_{2,2}+x_{2,3}+x_{2,4}$ holds for every molecular graph with at least $5$ vertices. Hence,
\begin{equation}\label{Eq557bc-Pl-L}
\Gamma'(G)=x_{1,2}\Theta'_{1,2} +x_{1,3}\Theta'_{1,3} + x_{2,2}\Theta'_{2,2}+ x_{2,3}\Theta'_{2,3} +x_{2,4}\Theta'_{2,4}+x_{3,3}\Theta'_{3,3}+x_{3,4}\Theta'_{3,4}
\end{equation}
is non-negative for $-1\le \al <0$ and non-positive for $0< \al \le 2$, and therefore we have the next result.

\begin{thm}\label{thm-pl}
If $G$ is a molecular $(n,m)$-graph, $n\ge5$, then
\[
Pl_\al(G) \begin{cases}
\ge \frac{4}{3}(3^\al-6^\al)n-\frac{1}{3}(2\cdot 3^\al-5\cdot 6^\al)m & \text{for $-1\le\alpha<0$,}\\
\le \frac{4}{3}(3^\al-6^\al)n-\frac{1}{3}(2\cdot 3^\al-5\cdot 6^\al)m & \text{for $0<\alpha\le2$,}
\end{cases}
\]
with either equality if and only if $G$ does not contain vertices of degrees 2 and 3.
\end{thm}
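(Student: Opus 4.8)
The plan is to build directly on the decomposition already in hand: equation~(\ref{Eq557bc-Pl}) expresses $Pl_\al(G)$ as the claimed linear combination of $n$ and $m$ plus the correction term $\Gamma'(G)$ of equation~(\ref{Eq557bc-Pl-L}). Hence the whole theorem reduces to pinning down the sign of $\Gamma'(G)$ together with the cases of equality. Since the seven counts $x_{1,2},x_{1,3},x_{2,2},x_{2,3},x_{2,4},x_{3,3},x_{3,4}$ are all nonnegative, the sign of $\Gamma'(G)$ is controlled by the signs of the coefficients $\Theta'_{i,j}$, which have already been tabulated in the paragraph preceding the statement. For the equality analysis I will repeatedly use equation~(\ref{Eq557b}) with $j=2$ and $j=3$, which shows that the vanishing of all seven counts is equivalent to $n_2=n_3=0$; and conversely, if $n_2=n_3=0$ then only edges of types $(1,1)$, $(1,4)$ and $(4,4)$ survive, none of which appears in $\Gamma'(G)$, so $\Gamma'(G)=0$.

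For $-1\le\al<0$ every $\Theta'_{i,j}$ is positive, so $\Gamma'(G)\ge0$ is immediate and the lower bound follows from~(\ref{Eq557bc-Pl}); equality forces each product $x_{i,j}\Theta'_{i,j}=0$, hence all seven counts vanish, i.e.\ $n_2=n_3=0$. The same argument covers the subinterval $0<\al<x_0$, where every $\Theta'_{i,j}$ (now including $\Theta'_{1,2}$) is negative, yielding $\Gamma'(G)\le0$ with the identical equality condition.

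The genuinely delicate regime is $x_0\le\al\le2$, where $\Theta'_{1,2}\ge0$ while the six remaining coefficients stay negative, so the single positive term $x_{1,2}\Theta'_{1,2}$ threatens the desired inequality. The plan is to absorb it using the structural inequality $x_{1,2}\le x_{2,2}+x_{2,3}+x_{2,4}$, valid for every molecular graph on at least five vertices. Since $\Theta'_{1,2}\ge0$, this gives $x_{1,2}\Theta'_{1,2}\le(x_{2,2}+x_{2,3}+x_{2,4})\Theta'_{1,2}$, whence
\begin{align*}
&x_{1,2}\Theta'_{1,2}+x_{2,2}\Theta'_{2,2}+x_{2,3}\Theta'_{2,3}+x_{2,4}\Theta'_{2,4}\\
&\qquad\le x_{2,2}(\Theta'_{1,2}+\Theta'_{2,2})+x_{2,3}(\Theta'_{1,2}+\Theta'_{2,3})+x_{2,4}(\Theta'_{1,2}+\Theta'_{2,4}).
\end{align*}
Because $\Theta'_{1,2}+\Theta'_{2,2}$, $\Theta'_{1,2}+\Theta'_{2,3}$ and $\Theta'_{1,2}+\Theta'_{2,4}$ are all negative on $[x_0,2]$, the right-hand side is nonpositive; adjoining the manifestly nonpositive terms $x_{1,3}\Theta'_{1,3}+x_{3,3}\Theta'_{3,3}+x_{3,4}\Theta'_{3,4}$ then gives $\Gamma'(G)\le0$ on this interval as well.

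Finally I would settle equality on $[x_0,2]$. Tracing the chain above, $\Gamma'(G)=0$ forces $x_{1,3}=x_{3,3}=x_{3,4}=0$ (their coefficients being strictly negative), forces each of $x_{2,2}(\Theta'_{1,2}+\Theta'_{2,2})$, $x_{2,3}(\Theta'_{1,2}+\Theta'_{2,3})$, $x_{2,4}(\Theta'_{1,2}+\Theta'_{2,4})$ to vanish and hence $x_{2,2}=x_{2,3}=x_{2,4}=0$, and then $x_{1,2}=0$ (from equality in $x_{1,2}\le x_{2,2}+x_{2,3}+x_{2,4}=0$); once more this means $n_2=n_3=0$ via~(\ref{Eq557b}). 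I expect this last regime to be the main obstacle, since it is the only place where a coefficient changes sign inside $0<\al\le2$; the entire difficulty is concentrated in showing that the positive contribution of $x_{1,2}\Theta'_{1,2}$ is always outweighed, which is exactly what the inequality $x_{1,2}\le x_{2,2}+x_{2,3}+x_{2,4}$ and the negativity of the pairwise sums accomplish.
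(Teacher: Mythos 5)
Your proposal is correct and takes essentially the same route as the paper: the paper likewise reads Theorem~\ref{thm-pl} off the decomposition~(\ref{Eq557bc-Pl}) via the sign pattern of the $\Theta'_{i,j}$, and neutralizes the sign change of $\Theta'_{1,2}$ on $[x_0,2]$ with exactly the structural inequality $x_{1,2}\le x_{2,2}+x_{2,3}+x_{2,4}$ combined with the negativity of $\Theta'_{1,2}+\Theta'_{2,2}$, $\Theta'_{1,2}+\Theta'_{2,3}$ and $\Theta'_{1,2}+\Theta'_{2,4}$ there. Your only addition is to spell out the equality analysis explicitly (including the boundary point $\al=x_0$, where $\Theta'_{1,2}=0$ and $x_{1,2}=0$ is recovered from $x_{1,2}\le x_{2,2}+x_{2,3}+x_{2,4}=0$), which the paper leaves implicit.
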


Recall that $x_0\approx1.8509$ is the unique root of the equation $\Theta'_{1,2}=0$ in $[-1,0) \cup (0,2]$. By virtue of Mathematica, Lemma \ref{Lem-K-01a} remains true, if we replace $\Theta_{i,j}$ with $\Theta'_{i,j}$, with an exception that
$\max \{ \Theta'_{2,2} , \Theta'_{2,3} , \Theta'_{2,4} \} < \Theta'_{1,2}<0$ when $x_0 \le \alpha \le 2$, which should be corrected into $\max \{ \Theta'_{2,2}, \Theta'_{2,3}, \Theta'_{2,4} \} < 0 \le \Theta'_{1,2}$ for $x_0\le\alpha\le2$.

%Also, all the inequalities, except
%$\max \{ \Theta_{2,2} , \Theta_{2,3} , \Theta_{2,4} \} < \Theta_{1,2}<0$,
%given in Lemma \ref{Lem-K-01a} remains true for $x_0 < \alpha \le 2$ if we replace $\Theta_{i,j}$ with $\Theta'_{i,j}$. Moreover, for the exception mentioned above, it holds that $\max \{ \Theta'_{2,2}, \Theta'_{2,3}, \Theta'_{2,4} \} < 0 < \Theta'_{1,2}$ for $x_0<\alpha\le2$.

Suppose that $x_{2,2}=x_{2,3}=x_{3,3}=0$. Recall that $\Theta'_{1,2}+\Theta'_{2,4} < 0$ when $x_0\le\alpha\le2$. If $n_2\ge1$ and $n_3\ge1$, then it holds that $x_{2,4}\ge x_{1,2}$ and $x_{2,4}\ge1$,
%and recall that
%{\color{red}
%$\Theta'_{1,2} \ge 0$ and $\Theta'_{1,2}+\Theta'_{2,4} < 0$ (for $x_0\le\alpha\le2$ {\color{blue}(I prefer to remove these two inequalities, i.e., $\Theta'_{1,2} \ge 0$ and $\Theta'_{1,2}+\Theta'_{2,4} < 0$ from here)}),
%}
hence
\begin{eqnarray*}
\Gamma'(G)&\le& x_{2,4}(\Theta'_{1,2}+\Theta'_{2,4})+(x_{1,3}+x_{3,4})\Theta'_{1,3} \\
&=& x_{2,4}(\Theta'_{1,2}+\Theta'_{2,4})+3 n_3\Theta'_{1,3} \\
&\le& (\Theta'_{1,2}+\Theta'_{2,4}) + 3\Theta'_{1,3}\\
&<& 2\Theta'_{1,3} +\Theta'_{3,4}
\end{eqnarray*}
for $x_0\le\alpha\le2$.
If $n_2 \ge 2,  n_3 = 0$ and $x_{1,2}\ge2$, then noting that $x_{2,4}\ge x_{1,2}$, i.e., $x_{2,4}\ge 2$, which implies that
\[
\Gamma'(G)= x_{1,2}\Theta'_{1,2}+x_{2,4}\Theta'_{2,4} \le x_{2,4}(\Theta'_{1,2}+\Theta'_{2,4}) \le 2(\Theta'_{1,2}+\Theta'_{2,4})<2\Theta'_{1,3}+\Theta'_{3,4}
\]
for $x_0\le\alpha\le2$.
For the remaining cases, proof of the following lemma is fully analogous to that of Lemma \ref{Lem-K-01} and hence omitted:

\begin{lem} \label{Lem-K-01-PL}
If a molecular graph $G$ satisfies the inequality $n_2+n_3 \ge 2$, then for the invariant $\Gamma'(G)$, defined in Eq. (\ref{Eq557bc-Pl-L}), it holds that
\[
\Gamma'(G) \begin{cases}
> 2\Theta'_{2,4}  & \text{for $-1\le\alpha<0$,}\\
< 2\Theta'_{2,4}  & \text{for $0<\alpha<1$,}\\
< 2\Theta'_{1,3}+\Theta'_{3,4}   & \text{for $1<\alpha\le2$.}
\end{cases}
\]
\end{lem}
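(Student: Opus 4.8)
The plan is to reproduce, essentially line for line, the case analysis used to prove Lemma \ref{Lem-K-01}, replacing every constant $\Theta_{i,j}$ by its primed counterpart $\Theta'_{i,j}$ and re-running the finitely many numerical comparisons in Mathematica. The starting point is the primed analogue of Lemma \ref{Lem-K-01a} recorded in the paragraph preceding the statement: on each of the subintervals $[-1,0)$, $(0,1)$ and $(1,x_0)$, every $\Theta'_{i,j}$ has exactly the same sign and the same relative ordering as the corresponding $\Theta_{i,j}$. Hence on these three subintervals the entire proof of Lemma \ref{Lem-K-01} carries over verbatim once primes are inserted. I would organize it with the same case split: first suppose at least one of $x_{2,2},x_{2,3},x_{3,3}$ is positive, treating $x_{2,2}\ge1$, $x_{2,3}\ge1$ and $x_{3,3}\ge1$ in turn by means of the relation $x_{1,3}+x_{2,3}+2x_{3,3}+x_{3,4}=3n_3$ and the ordering in the primed Lemma \ref{Lem-K-01a}(i); then treat $x_{2,2}=x_{2,3}=x_{3,3}=0$, where $\Gamma'(G)=x_{1,2}\Theta'_{1,2}+x_{1,3}\Theta'_{1,3}+x_{2,4}\Theta'_{2,4}+x_{3,4}\Theta'_{3,4}$ with $x_{1,2}+x_{2,4}=2n_2$ and $x_{1,3}+x_{3,4}=3n_3$, subdividing into $(n_2\ge1,n_3\ge1)$, $(n_2=0,n_3\ge2)$ and $(n_2\ge2,n_3=0)$.

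The one genuinely new feature, which I expect to be the main obstacle, is the sign reversal of $\Theta'_{1,2}$ on $[x_0,2]\subseteq(1,2]$: there $\Theta'_{1,2}\ge0$, whereas the unprimed $\Theta_{1,2}$ stays negative throughout $(1,2]$. Several upper estimates in the original proof simply drop the term $x_{1,2}\Theta_{1,2}$ because it is non-positive; this step is no longer valid for $x_0\le\alpha\le2$, and the positive contribution $x_{1,2}\Theta'_{1,2}$ must be controlled. The tool for this, already prepared before the statement, is the family of inequalities $\Theta'_{1,2}+\Theta'_{2,2}<0$, $\Theta'_{1,2}+\Theta'_{2,3}<0$ and $\Theta'_{1,2}+\Theta'_{2,4}<0$, valid on $[x_0,2]$, together with the structural bound $x_{1,2}\le x_{2,2}+x_{2,3}+x_{2,4}$ that holds for every molecular graph on at least five vertices. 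Distributing the $x_{1,2}$ copies of $\Theta'_{1,2}$ among the (at least as many) copies of $\Theta'_{2,2},\Theta'_{2,3},\Theta'_{2,4}$ and pairing each off, the combined degree-$2$ contribution becomes non-positive, so the positive weight of $\Theta'_{1,2}$ is absorbed and the remaining degree-$3$ terms are estimated exactly as in Lemma \ref{Lem-K-01}. In the reduced situation $x_{2,2}=x_{2,3}=0$ this mechanism collapses to the single pairing $\Theta'_{1,2}+\Theta'_{2,4}<0$ with $x_{2,4}\ge x_{1,2}$.

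The delicacy is that the target bounds, such as $2\Theta'_{1,3}+\Theta'_{3,4}$, are specific negative numbers, so merely making the degree-$2$ block non-positive is not always enough; one must keep a spare negative term to beat them. This is exactly why the two sharpest configurations under $x_{2,2}=x_{2,3}=x_{3,3}=0$, namely $(n_2\ge1,n_3\ge1)$ and $(n_2\ge2,n_3=0,\,x_{1,2}\ge2)$, are dispatched in the discussion before the statement. It then remains to clear the residual subcases on $[x_0,2]$: for $(n_2\ge2,n_3=0)$ the gaps $x_{1,2}=0$ (where $\Gamma'(G)=x_{2,4}\Theta'_{2,4}\le4\Theta'_{2,4}$ has no $\Theta'_{1,2}$ term) and $x_{1,2}=1$ (where $x_{2,4}\ge3$ gives $\Gamma'(G)\le(\Theta'_{1,2}+\Theta'_{2,4})+2\Theta'_{2,4}<2\Theta'_{2,4}$); and for $(n_2=0,n_3\ge2)$ the bound $\Gamma'(G)\le6\Theta'_{1,3}$. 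In each of these one verifies by Mathematica that the resulting quantity is $<2\Theta'_{1,3}+\Theta'_{3,4}$ on $[x_0,2]$, just as in the closing lines of Lemma \ref{Lem-K-01}. Collecting the three sign-uniform subintervals and the patched interval $[x_0,2]$ then yields the three asserted inequalities for $\Gamma'(G)$.
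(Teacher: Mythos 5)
Your architecture is exactly the paper's (the pre-lemma patches on $[x_0,2]$ via $\Theta'_{1,2}+\Theta'_{2,j}<0$ and $x_{1,2}\le x_{2,2}+x_{2,3}+x_{2,4}$, plus an ``analogous'' rerun of the proof of Lemma \ref{Lem-K-01}), but two of the numerical verifications you assert would in fact fail, so the proof as written has genuine gaps. First, at the endpoint $\alpha=2$ the comparison you need in the residual subcase $n_2=0$, $n_3\ge2$ is false: there $\Theta'_{1,3}=4-10+4=-2$ and $\Theta'_{3,4}=25-1-32=-8$, so $6\Theta'_{1,3}=-12=2\Theta'_{1,3}+\Theta'_{3,4}$, an equality rather than the strict inequality the lemma claims (the function $4\Theta'_{1,3}-\Theta'_{3,4}=4\cdot2^{\alpha}-\tfrac{13}{3}\cdot3^{\alpha}+\tfrac{4}{3}\cdot6^{\alpha}-5^{\alpha}$ is negative on $(1,2)$ but vanishes at $\alpha=2$). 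Hence the chain $\Gamma'(G)\le 3n_3\Theta'_{1,3}\le 6\Theta'_{1,3}$ cannot deliver strictness at $\alpha=2$; the would-be equality configuration ($n_3=2$, $x_{1,3}=6$, $x_{3,4}=0$) is two disjoint copies of $K_{1,3}$, so you must invoke connectedness: with $n\ge5$, $n_2=0$ and $x_{2,3}=x_{3,3}=0$, every degree-$3$ vertex has a degree-$4$ neighbour, giving $x_{3,4}\ge1$ and $\Gamma'(G)\le 5\Theta'_{1,3}+\Theta'_{3,4}<2\Theta'_{1,3}+\Theta'_{3,4}$. This extra structural step is missing from your write-up (and, to be fair, is buried in the paper's ``fully analogous, omitted'' remark).

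Second, your opening premise --- that identical signs and relative orderings of the $\Theta'_{i,j}$ on $(1,x_0)$ make the proof of Lemma \ref{Lem-K-01} ``carry over verbatim'' --- is not valid reasoning, because the closing steps of that proof are quantitative comparisons of specific linear combinations, not consequences of sign and order alone; and one of them genuinely breaks. In the subcase $x_{2,2}=x_{2,3}=x_{3,3}=0$, $n_2\ge1$, $n_3\ge1$, the verbatim bound is $\Gamma'(G)\le 2\Theta'_{1,2}+3\Theta'_{1,3}$, but since $\Theta'_{1,2}\to0^{-}$ as $\alpha\to x_0^{-}$, the needed inequality $2\Theta'_{1,2}+\Theta'_{1,3}\le\Theta'_{3,4}$ fails for $\alpha$ roughly in $(1.61,x_0)$: at $\alpha=1.8$ one computes $2\Theta'_{1,2}+3\Theta'_{1,3}\approx-5.74$ while $2\Theta'_{1,3}+\Theta'_{3,4}\approx-8.55$. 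Your absorption mechanism (replacing $x_{1,2}\Theta'_{1,2}$ by $x_{2,4}\Theta'_{1,2}$ via $x_{2,4}\ge x_{1,2}$) requires $\Theta'_{1,2}\ge0$ and so only operates on $[x_0,2]$, exactly as in the paper's pre-lemma discussion; it does not close this window. The repair is short but is a \emph{different} chain from the unprimed one: for $\Theta'_{1,2}<0$ drop $x_{1,2}\Theta'_{1,2}\le0$ and use $x_{2,4}\ge1$ to get $\Gamma'(G)\le\Theta'_{2,4}+3\Theta'_{1,3}$, and the comparison $\Theta'_{2,4}+\Theta'_{1,3}<\Theta'_{3,4}$ does hold on all of $(1,2]$. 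Until these two points are patched, your proposal asserts two Mathematica checks that would actually return \emph{false}, so it does not yet prove the lemma.
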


The proof of the next result is fully analogous to that of Theorem \ref{cor44} and hence omitted.

\begin{thm}\label{cor44-pl}
%Consider the family of all molecular $(n,m)$-graphs, where $n-1\le m \le 2n$ and $n\ge5$.
Let $G$ be a molecular $(n,m)$-graph, where $n-1\le m \le 2n$ and $n\ge5$.
\begin{enumerate}
\item[$(i)$]
Suppose that $m+n\equiv0$ (mod  3).
%For $-1\le\alpha<0$,
%$$
%\chi_\al(G) \ge \frac{4}{3}(5^\al-8^\al)n-\frac{1}{3}(2\cdot 5^\al-5\cdot 8^\al)m,
%$$
%and for $0<\alpha<1$ and $1<\alpha\le2$,
%$$
%\chi_\al(G) \le \frac{4}{3}(5^\al-8^\al)n-\frac{1}{3}(2\cdot 5^\al-5\cdot 8^\al)m,
%$$
Then
\[
Pl_\al(G) \begin{cases}
\ge \frac{4}{3}(3^\al-6^\al)n-\frac{1}{3}(2\cdot 3^\al-5\cdot 6^\al)m & \text{for $-1\le\alpha<0$,}\\
\le \frac{4}{3}(3^\al-6^\al)n-\frac{1}{3}(2\cdot 3^\al-5\cdot 6^\al)m & \text{for $0<\alpha<1$ and $1<\alpha\le2$,}
\end{cases}
\]
with either equality if and only if $G$ contains no vertices of degrees $2$ and $3$ (i.e., $n_2 = 0$ and $n_3 = 0$).
\item[$(ii)$]
Suppose that $m+n\equiv1$ (mod  3).
%For $-1\le\alpha<0$,
%$$
%\chi_\al(G) \ge \frac{4}{3}(5^\al-8^\al)n-\frac{1}{3}(2\cdot 5^\al-5\cdot 8^\al)m + 3\cdot7^\al -\frac{1}{3}\cdot 5^\al -\frac{8}{3}\cdot 8^\al,
%$$
%and for $0<\alpha<1$,
%$$
%\chi_\al(G) \le \frac{4}{3}(5^\al-8^\al)n-\frac{1}{3}(2\cdot 5^\al-5\cdot 8^\al)m + 3\cdot7^\al -\frac{1}{3}\cdot 5^\al -\frac{8}{3}\cdot 8^\al,
%$$
For $-1\le\alpha<0$ and $0<\alpha<1$,
\[
Pl_\al(G) \begin{cases}
\ge \frac{4}{3}(3^\al-6^\al)n-\frac{1}{3}(2\cdot 3^\al-5\cdot 6^\al)m + 3\cdot5^\al -\frac{1}{3}\cdot 3^\al -\frac{8}{3}\cdot 6^\al & \text{for $-1\le\alpha<0$,}\\
\le \frac{4}{3}(3^\al-6^\al)n-\frac{1}{3}(2\cdot 3^\al-5\cdot 6^\al)m + 3\cdot5^\al -\frac{1}{3}\cdot 3^\al -\frac{8}{3}\cdot 6^\al & \text{for $0<\alpha<1$,}
\end{cases}
\]
with either equality if and only if $G$ contains no vertex of degree $2$ and exactly one vertex of degree $3$ (i.e., $n_2 = 0$ and $n_3 = 1$), which is adjacent to three vertices of degree $4$ (i.e., $x_{1,3} = 0$ and $x_{3,4} = 3$).
For $1<\alpha\le2$,
$$
Pl_\al(G) \le \frac{4}{3}(3^\al-6^\al)n-\frac{1}{3}(2\cdot 3^\al-5\cdot 6^\al)m + 2\cdot2^\al + 5^\al - \frac{7}{3}\cdot3^\al - \frac{2}{3}\cdot 6^\al,
$$
with equality if and only if $G$ contains no vertex of degree $2$ and exactly one vertex of degree $3$ (i.e., $n_2 = 0$ and $n_3 = 1$), which is adjacent to two vertices of degree $1$ and one vertex of degree $4$ (i.e., $x_{1,3} = 2$ and $x_{3,4} = 1$).
\item[$(iii)$]
Suppose that $m+n\equiv2$ (mod  3).
%For $-1\le\alpha<0$,
%$$
%\chi_\al(G) \ge \frac{4}{3}(5^\al-8^\al)n-\frac{1}{3}(2\cdot 5^\al-5\cdot 8^\al)m + 2\cdot6^\al -\frac{2}{3}\cdot 5^\al -\frac{4}{3}\cdot 8^\al,
%$$
%and for $0<\alpha<1$,
%$$
%Pl_\al(G) \le \frac{4}{3}(5^\al-8^\al)n-\frac{1}{3}(2\cdot 5^\al-5\cdot 8^\al)m + 2\cdot6^\al -\frac{2}{3}\cdot 5^\al -\frac{4}{3}\cdot 8^\al,
%$$
For $-1\le\alpha<0$ and $0<\alpha<1$,
\[
Pl_\al(G) \begin{cases}
\ge \frac{4}{3}(3^\al-6^\al)n-\frac{1}{3}(2\cdot 3^\al-5\cdot 6^\al)m + 2\cdot4^\al -\frac{2}{3}\cdot 3^\al -\frac{4}{3}\cdot 6^\al & \text{for $-1\le\alpha<0$,}\\
\le \frac{4}{3}(3^\al-6^\al)n-\frac{1}{3}(2\cdot 3^\al-5\cdot 6^\al)m + 2\cdot4^\al -\frac{2}{3}\cdot 3^\al -\frac{4}{3}\cdot 6^\al & \text{for $0<\alpha<1$,}
\end{cases}
\]
with either equality if and only if $G$ contains no vertex of degree $3$ and exactly one vertex of degree $2$ (i.e., $n_3 = 0$ and $n_2 = 1$), which is adjacent to two vertices of degree $4$ (i.e., $x_{1,2} = 0$ and $x_{2,4} = 2$).
For $1<\alpha\le2$,
$$
Pl_\al(G) \le \frac{4}{3}(3^\al-6^\al)n-\frac{1}{3}(2\cdot 3^\al-5\cdot 6^\al)m + 1 + 4^\al - \frac{5}{3}\cdot3^\al - \frac{1}{3}\cdot 6^\al,
$$
with equality if and only if $G$ contains no vertex of degree $3$ and exactly one vertex of degree $2$ (i.e., $n_3 = 0$ and $n_2 = 1$), which is adjacent to one vertex of degree $1$ and one vertex of degree $4$ (i.e., $x_{1,2} = 1$ and $x_{2,4} = 1$).
\end{enumerate}
\end{thm}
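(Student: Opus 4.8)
The plan is to reproduce, almost verbatim, the three-case argument that proved Theorem \ref{cor44}, with $\chi_\al$ replaced by $Pl_\al$, the constants $5^\al,8^\al$ replaced by $3^\al,6^\al$, and every $\Theta_{i,j}$ replaced by its primed counterpart $\Theta'_{i,j}$. Two ingredients carry the argument and are already available: the decomposition
\begin{equation*}
Pl_\al(G)=\frac{4}{3}(3^\al-6^\al)n-\frac{1}{3}(2\cdot 3^\al-5\cdot 6^\al)m+\Gamma'(G)
\end{equation*}
furnished by Eq.~(\ref{Eq557bc-Pl}) together with Eq.~(\ref{Eq557bc-Pl-L}), and the congruence $m+n\equiv n_3-n_2\pmod{3}$ from Eq.~(\ref{Eq-K-01}). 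Since the part linear in $n,m$ is fixed, everything reduces to extremizing the correction $\Gamma'(G)$ under the degree constraints forced by the residue of $m+n$ modulo $3$.

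First I would dispose of Case~1 ($m+n\equiv0$): the asserted bound is exactly that of Theorem \ref{thm-pl}, so nothing new is needed. For Cases~2 and~3 I would split on whether $n_2+n_3\ge2$ or $n_2+n_3\le1$. When $n_2+n_3\ge2$, Lemma \ref{Lem-K-01-PL} gives $\Gamma'(G)>2\Theta'_{2,4}$, $<2\Theta'_{2,4}$, and $<2\Theta'_{1,3}+\Theta'_{3,4}$ in the three $\al$-ranges, and---exactly as in Theorem \ref{cor44}---a short Mathematica comparison of these with the targeted extremal values shows such graphs are strictly separated from the bound. When $n_2+n_3\le1$, the congruence pins the unique admissible profile: $n_2=0,\,n_3=1$ in Case~2 and $n_2=1,\,n_3=0$ in Case~3, whereupon $\Gamma'(G)$ collapses to $x_{1,3}\Theta'_{1,3}+x_{3,4}\Theta'_{3,4}$ with $x_{1,3}+x_{3,4}=3$, respectively $x_{1,2}\Theta'_{1,2}+x_{2,4}\Theta'_{2,4}$ with $x_{1,2}+x_{2,4}=2$.

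The two-term expression is then extremized using the primed analogue of Lemma \ref{Lem-K-01a}. For $-1\le\al<0$ and $0<\al<1$ the signs and orderings are uniform, so the optimum sits at $(x_{1,3},x_{3,4})=(0,3)$ and $(x_{1,2},x_{2,4})=(0,2)$, giving the values $3\Theta'_{3,4}$ and $2\Theta'_{2,4}$. For $1<\al\le2$ the unconstrained optimum would instead load all weight on the larger (less negative) coefficient, but connectivity of a saturated hydrocarbon on $n\ge5$ vertices forbids $x_{1,3}=3$ (that would isolate a $K_{1,3}$) and $x_{1,2}=2$ (that would isolate a $P_3$). Hence $x_{1,3}\le2$ and $x_{1,2}\le1$, pinning the maxima at $(2,1)$ and $(1,1)$, i.e.\ at $2\Theta'_{1,3}+\Theta'_{3,4}$ and $\Theta'_{1,2}+\Theta'_{2,4}$; a direct computation identifies these two-term values with the closed-form right-hand sides in the statement.

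The only genuine departure from the proof of Theorem \ref{cor44}---and the reason the present proof is not a literal transcription---is the sign change of $\Theta'_{1,2}$ at $x_0\approx1.8509$: unlike every $\Theta_{i,j}$, the function $\Theta'_{1,2}$ is \emph{positive} on $(x_0,2]$. Within the theorem proof itself this anomaly enters only through the Case~3 extremization, where, however, merely the ordering $\Theta'_{1,2}>\Theta'_{2,4}$ is needed, and this holds throughout $1<\al\le2$ by the corrected primed version of Lemma \ref{Lem-K-01a}(ii). The harder consequences of the sign change are entirely absorbed into Lemma \ref{Lem-K-01-PL}, which I take as given; its proof is where one must avoid using $\Theta'_{1,2}$ in isolation on $[x_0,2]$ and instead work with the combinations $\Theta'_{1,2}+\Theta'_{2,2}$, $\Theta'_{1,2}+\Theta'_{2,3}$, $\Theta'_{1,2}+\Theta'_{2,4}$ (all negative there) together with $x_{2,4}\ge x_{1,2}$. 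The last comparison to verify by Mathematica is the chain $2\Theta'_{1,3}+\Theta'_{3,4}<\Theta'_{1,2}+\Theta'_{2,4}$ for $1<\al\le2$, which closes Case~3; combining the three cases through the decomposition above then yields the theorem.
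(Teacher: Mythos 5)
Your proposal is correct and is essentially the paper's own proof: the paper explicitly omits the argument as ``fully analogous'' to that of Theorem \ref{cor44}, and you reconstruct exactly that transcription --- Case 1 from Theorem \ref{thm-pl}, the split $n_2+n_3\ge2$ versus $n_2+n_3\le1$ via Lemma \ref{Lem-K-01-PL} and the congruence (\ref{Eq-K-01}), the two-term extremizations with sums $3$ and $2$, and the sign change of $\Theta'_{1,2}$ at $x_0\approx1.8509$ absorbed into Lemma \ref{Lem-K-01-PL} and the corrected primed Lemma \ref{Lem-K-01a}(ii), just as the paper does in the text preceding the theorem. You even make explicit the connectivity facts $x_{1,3}\le2$ and $x_{1,2}\le1$ (no $K_{1,3}$ or $P_3$ component for a connected graph with $n\ge5$) that the paper uses only tacitly, so no gap remains.
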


In Theorem \ref{cor44-pl}, we considered all non-zero values of $\al$ between $-1$ and $2$, except $\al = 1$, because the sharp bounds of $Pl_1$ for molecular $(n,m)$-graphs follows from the results concerning $M_1$, established in \cite{Hu05}. Theorem \ref{cor44-pl} also gives sharp bounds on the reformulated first Zagreb index $EM_1$ for molecular $(n,m)$-graphs.

Finally, similar to Eq. (\ref{Eq557bc}), we have the following equation concerning the ordinary generalized geometric-arithmetic index $\mbox{OGA}_{k}$:
\begin{eqnarray}\label{Eq557bc-GA}
\mbox{OGA}_{k}(G)&=& \frac{4}{3} \left(\left(\frac{4}{5}\right)^k - 1\right) n - \frac{1}{3} \left( 2\left(\frac{4}{5}\right)^k - 5 \right) m
+ \Upsilon(G),
\end{eqnarray}
where
\begin{equation}\label{Eq557bc-GA-1}
\Upsilon(G)= x_{1,2}\Phi_{1,2}
+x_{1,3}\Phi_{1,3} + x_{2,2}\Phi_{2,2} + x_{2,3}\Phi_{2,3} +x_{2,4}\Phi_{2,4}+x_{3,3}\Phi_{3,3}+x_{3,4}\Phi_{3,4},
\end{equation}
and
$$
\Phi_{1,2}= \left( \frac{2\sqrt{2}}{3} \right)^k - \frac{4}{3} \left( \frac{4}{5} \right)^k + \frac{1}{3}, \ \
\Phi_{1,3} = \left( \frac{\sqrt{3}}{2} \right)^k - \frac{10}{9} \left( \frac{4}{5} \right)^k + \frac{1}{9},
$$
$$
\Phi_{2,2}= \frac{2}{3} \left( 1 - \left( \frac{4}{5} \right)^k \right), \ \
\Phi_{2,3}= \left( \frac{2\sqrt{6}}{5} \right)^k - \frac{4}{9} \left( \frac{4}{5} \right)^k - \frac{5}{9},
$$
$$
\Phi_{2,4}= \left( \frac{2\sqrt{2}}{3} \right)^k - \frac{1}{3} \left( \frac{4}{5} \right)^k - \frac{2}{3}, \ \
\Phi_{3,3}= \frac{2}{9} \left( 1 - \left( \frac{4}{5} \right)^k \right),
$$
$$
\Phi_{3,4}= \left( \frac{4\sqrt{3}}{7} \right)^k - \frac{1}{9} \left( \frac{4}{5} \right)^k - \frac{8}{9}.
$$
It can be easily checked that the inequalities
$$
\Phi_{1,2}>\Phi_{2,2}>\Phi_{2,3}>\Phi_{2,4}>0
$$
and
$$
\Phi_{1,3}>\Phi_{2,3}>\Phi_{3,3}>\Phi_{3,4}>0
$$
hold for $0<k\le1$, which implies the following lemma (whose proof is fully analogous to that of Lemma \ref{Lem-K-01} and hence omitted):

\begin{lem} \label{Lem-K-01-GA}
Let $0<k\le1$ and $G$ be a molecular graph satisfying the inequality $n_2+n_3 \ge 2$. For the invariant $\Upsilon(G)$, defined in Eq. (\ref{Eq557bc-GA-1}), it holds that $ \Upsilon(G) > 3\Phi_{3,4}$.
\end{lem}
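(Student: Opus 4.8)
The plan is to mirror the proof of Lemma~\ref{Lem-K-01} along its $-1\le\al<0$ branch. Indeed, for $0<k\le1$ all seven coefficients $\Phi_{i,j}$ are positive, precisely as all seven $\Theta_{i,j}$ are positive on $[-1,0)$, so the quantity $\Upsilon(G)$ of Eq.~(\ref{Eq557bc-GA-1}) is a non-negative combination of the $x_{i,j}$, and the chains $\Phi_{1,2}>\Phi_{2,2}>\Phi_{2,3}>\Phi_{2,4}>0$ and $\Phi_{1,3}>\Phi_{2,3}>\Phi_{3,3}>\Phi_{3,4}>0$ take over the role played earlier by Lemma~\ref{Lem-K-01a}(i),(ii). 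First I would split the argument according to whether at least one of $x_{2,2},x_{2,3},x_{3,3}$ is positive, or all three vanish.

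Suppose first that one of $x_{2,2},x_{2,3},x_{3,3}$ is positive. If $x_{2,2}\ge1$, discarding the remaining non-negative terms gives $\Upsilon(G)\ge\Phi_{2,2}$, so it suffices to verify $\Phi_{2,2}>3\Phi_{3,4}$. If $x_{2,3}\ge1$, I would keep the term $x_{2,3}\Phi_{2,3}$ and bound each degree-$3$-incident contribution below by $\Phi_{3,4}$ using the second chain, obtaining $\Upsilon(G)\ge x_{2,3}\Phi_{2,3}+(x_{1,3}+x_{3,3}+x_{3,4})\Phi_{3,4}$; then Eq.~(\ref{Eq557b}) with $j=3$, namely $x_{1,3}+x_{2,3}+2x_{3,3}+x_{3,4}=3n_3$ with $n_3\ge1$, separates the cases $x_{2,3}\in\{1,2\}$ (which forces $x_{1,3}+x_{3,3}+x_{3,4}\ge1$, whence $\Upsilon(G)\ge\Phi_{2,3}+\Phi_{3,4}$) and $x_{2,3}\ge3$ (whence $\Upsilon(G)\ge3\Phi_{2,3}>3\Phi_{3,4}$). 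The case $x_{3,3}\ge1$ is identical once one notes that a $3$-$3$ edge forces $n_3\ge2$, so $x_{1,3}+x_{2,3}+x_{3,4}=3n_3-2x_{3,3}\ge2$ for $x_{3,3}\in\{1,2\}$, giving $\Upsilon(G)\ge\Phi_{3,3}+2\Phi_{3,4}>3\Phi_{3,4}$, while $x_{3,3}\ge3$ gives $\Upsilon(G)\ge3\Phi_{3,3}>3\Phi_{3,4}$.

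When $x_{2,2}=x_{2,3}=x_{3,3}=0$, the index collapses to $\Upsilon(G)=x_{1,2}\Phi_{1,2}+x_{1,3}\Phi_{1,3}+x_{2,4}\Phi_{2,4}+x_{3,4}\Phi_{3,4}$, while Eq.~(\ref{Eq557b}) yields $x_{1,2}+x_{2,4}=2n_2$ and $x_{1,3}+x_{3,4}=3n_3$. Applying $\Phi_{1,2}>\Phi_{2,4}$ and $\Phi_{1,3}>\Phi_{3,4}$ gives $\Upsilon(G)\ge 2n_2\Phi_{2,4}+3n_3\Phi_{3,4}$, after which I would run through the pairs $(n_2,n_3)$ permitted by $n_2+n_3\ge2$: the cases $n_2\ge1,\,n_3\ge1$ and $n_2=0,\,n_3\ge2$ produce the bounds $2\Phi_{2,4}+3\Phi_{3,4}$ and $6\Phi_{3,4}$, each plainly exceeding $3\Phi_{3,4}$.

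The one delicate pair is $n_2\ge2,\,n_3=0$, where the bound degenerates to $\Upsilon(G)\ge4\Phi_{2,4}$ (attained when both degree-$2$ vertices are adjacent only to degree-$4$ vertices). Because the two ordering chains never compare $\Phi_{2,4}$ with $\Phi_{3,4}$ --- and in fact $\Phi_{2,4}<\Phi_{3,4}$ on $(0,1]$ --- the required inequality $4\Phi_{2,4}>3\Phi_{3,4}$ does not follow from the monotonicity exploited elsewhere, and this is the crux of the argument; here is where the GA case genuinely departs from Lemma~\ref{Lem-K-01}, whose positive branch needed only the trivial $4\Theta_{2,4}>2\Theta_{2,4}$. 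This inequality, together with the auxiliary scalar inequalities $\Phi_{2,2}>3\Phi_{3,4}$ and $\Phi_{2,3}>2\Phi_{3,4}$ invoked above, reduces to checking single-variable inequalities in $k\in(0,1]$, which I would confirm with Mathematica exactly as the corresponding $\Theta$-inequalities were confirmed in the proof of Lemma~\ref{Lem-K-01}.
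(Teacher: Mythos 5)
Your proposal is correct and is essentially the paper's own proof: the paper omits the argument as ``fully analogous'' to Lemma~\ref{Lem-K-01}, and your case split (one of $x_{2,2},x_{2,3},x_{3,3}$ positive versus all zero, then the pairs $(n_2,n_3)$ via $x_{1,2}+x_{2,4}=2n_2$ and $x_{1,3}+x_{3,4}=3n_3$) is exactly that adaptation, with the right observation that the case $n_2\ge2$, $n_3=0$ reduces to the genuinely non-trivial scalar check $4\Phi_{2,4}>3\Phi_{3,4}$, which does hold on $(0,1]$ (tightest near $k=1$, where $4\Phi_{2,4}\approx0.0379>0.0359\approx3\Phi_{3,4}$). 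One immaterial slip: your parenthetical claim that $\Phi_{2,4}<\Phi_{3,4}$ on all of $(0,1]$ is false for small $k$ (the two cross near $k\approx0.25$; e.g.\ $\Phi_{2,4}(0.2)\approx0.00284>\Phi_{3,4}(0.2)\approx0.00279$), but this does not affect the proof, since you verify the needed inequality $4\Phi_{2,4}>3\Phi_{3,4}$ directly rather than through that comparison.
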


The proof of the following result is similar to that of Theorem \ref{cor44} and hence omitted:

%\begin{thm}\label{cor44-GA}
%Consider the family of all molecular $(n,m)$-graphs, where $n\ge5$ is sufficiently large (i.e., in each of the following parts, at least one of the mentioned extremal graphs exists) and $n-1\le m \le 2n$. Let $0<k\le1$.
%\begin{enumerate}
%\item[$(i)$]
%If $m+n\equiv0$ (mod  3), then the minimum $\mbox{OGA}_{k}$ value is
%\[
%\frac{4}{3} \left(\left(\frac{4}{5}\right)^k - 1\right) n - \frac{1}{3} \left( 2\left(\frac{4}{5}\right)^k - 5 \right) m,
%\]
%which is attained by only those graphs that do not contain vertices of degrees 2 and 3.
%\item[$(ii)$]
%If $m+n\equiv1$ (mod  3), then the minimum $\mbox{OGA}_{k}$ value is
%\[
%\frac{4}{3} \left(\left(\frac{4}{5}\right)^k - 1\right) n - \frac{1}{3} \left( 2\left(\frac{4}{5}\right)^k - 5 \right) m + 3\left( \frac{4\sqrt{3}}{7} \right)^k - \frac{1}{3} \left( \frac{4}{5} \right)^k - \frac{8}{3},
%\]
%which is attained by only those graphs that contain no vertex of degree 2 and exactly one vertex of degree 3, which is adjacent to three vertices of degree 4.
%\item[$(iii)$]
%If $m+n\equiv2$ (mod  3), then the minimum $\mbox{OGA}_{k}$ value is
%\[
%\frac{4}{3} \left(\left(\frac{4}{5}\right)^k - 1\right) n - \frac{1}{3} \left( 2\left(\frac{4}{5}\right)^k - 5 \right) m + 2\left( \frac{2\sqrt{2}}{3} \right)^k - \frac{2}{3} \left( \frac{4}{5} \right)^k - \frac{4}{3},
%\]
%which is attained by only those graphs that contain no vertex of degree 3 and exactly one vertex of degree 2, which is adjacent to two vertices of degree 4.
%\end{enumerate}
%\end{thm}

\begin{thm}\label{cor44-GA}
%Consider the family of all molecular $(n,m)$-graphs, where $n-1\le m \le 2n$ and $n\ge5$.
Let $G$ be a molecular $(n,m)$-graph, where $n-1\le m \le 2n$ and $n\ge5$.
Suppose that $0<k\le1$.
\begin{enumerate}
\item[$(i)$]
If $m+n\equiv0$ (mod  3), then
\[
\mbox{OGA}_{k} (G) \ge \frac{4}{3} \left(\left(\frac{4}{5}\right)^k - 1\right) n - \frac{1}{3} \left( 2\left(\frac{4}{5}\right)^k - 5 \right) m,
\]
with equality if and only if $G$ contains no vertices of degrees $2$ and $3$ (i.e., $n_2 = 0$ and $n_3 = 0$).
\item[$(ii)$]
If $m+n\equiv1$ (mod  3), then
\[
\mbox{OGA}_{k} (G) \ge \frac{4}{3} \left(\left(\frac{4}{5}\right)^k - 1\right) n - \frac{1}{3} \left( 2\left(\frac{4}{5}\right)^k - 5 \right) m + 3\left( \frac{4\sqrt{3}}{7} \right)^k - \frac{1}{3} \left( \frac{4}{5} \right)^k - \frac{8}{3},
\]
with equality if and only if $G$ contains no vertex of degree $2$ and exactly one vertex of degree $3$ (i.e., $n_2 = 0$ and $n_3 = 1$), which is adjacent to three vertices of degree $4$ (i.e., $x_{1,3} = 0$ and $x_{3,4} = 3$).
\item[$(iii)$]
If $m+n\equiv2$ (mod  3), then
\[
\mbox{OGA}_{k} (G) \ge \frac{4}{3} \left(\left(\frac{4}{5}\right)^k - 1\right) n - \frac{1}{3} \left( 2\left(\frac{4}{5}\right)^k - 5 \right) m + 2\left( \frac{2\sqrt{2}}{3} \right)^k - \frac{2}{3} \left( \frac{4}{5} \right)^k - \frac{4}{3},
\]
with equality if and only if $G$ contains no vertex of degree $3$ and exactly one vertex of degree $2$ (i.e., $n_3 = 0$ and $n_2 = 1$), which is adjacent to two vertices of degree $4$ (i.e., $x_{1,2} = 0$ and $x_{2,4} = 2$).
\end{enumerate}
\end{thm}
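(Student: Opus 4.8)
The plan is to mirror the proof of Theorem~\ref{cor44}, now working from Eq.~(\ref{Eq557bc-GA}): since the term $\frac{4}{3}\left(\left(\frac{4}{5}\right)^k-1\right)n-\frac{1}{3}\left(2\left(\frac{4}{5}\right)^k-5\right)m$ is determined by $n$ and $m$ alone, a sharp lower bound on $\mbox{OGA}_k(G)$ is equivalent to a sharp lower bound on the invariant $\Upsilon(G)$ of Eq.~(\ref{Eq557bc-GA-1}). Because every coefficient $\Phi_{i,j}$ is strictly positive for $0<k\le1$, we have $\Upsilon(G)\ge0$, with equality exactly when $x_{1,2}=x_{1,3}=x_{2,2}=x_{2,3}=x_{2,4}=x_{3,3}=x_{3,4}=0$, i.e.\ when $n_2=n_3=0$. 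This disposes of part~$(i)$: when $m+n\equiv0\pmod 3$, the congruence (\ref{Eq-K-01}) permits $n_2=n_3=0$, so the global minimum $\Upsilon(G)=0$ is attained, and uniqueness of the minimiser follows from positivity of the $\Phi_{i,j}$. Note that, unlike the $\chi_\al$ case, no sign split in $k$ occurs, which is why all three parts yield lower bounds only.

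For parts~$(ii)$ and~$(iii)$ I would first use (\ref{Eq-K-01}), namely $m+n\equiv n_3-n_2\pmod 3$, to enumerate the admissible pairs $(n_2,n_3)$ with $n_2+n_3$ small, and then split into the subcases $n_2+n_3\le1$ and $n_2+n_3\ge2$. If $m+n\equiv1$ and $n_2+n_3\le1$, then (\ref{Eq-K-01}) forces $(n_2,n_3)=(0,1)$, whence $\Upsilon(G)=x_{1,3}\Phi_{1,3}+x_{3,4}\Phi_{3,4}$ with $x_{1,3}+x_{3,4}=3n_3=3$; the chain $\Phi_{1,3}>\Phi_{3,4}>0$ then yields the minimum $3\Phi_{3,4}$, attained only at $x_{1,3}=0$, $x_{3,4}=3$. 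If $m+n\equiv2$ and $n_2+n_3\le1$, then $(n_2,n_3)=(1,0)$, so $\Upsilon(G)=x_{1,2}\Phi_{1,2}+x_{2,4}\Phi_{2,4}$ with $x_{1,2}+x_{2,4}=2n_2=2$, and $\Phi_{1,2}>\Phi_{2,4}>0$ gives the minimum $2\Phi_{2,4}$, attained only at $x_{1,2}=0$, $x_{2,4}=2$. Expanding $3\Phi_{3,4}$ and $2\Phi_{2,4}$ reproduces exactly the additive constants displayed in the theorem.

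It then remains to rule out the configurations with $n_2+n_3\ge2$, and this is where Lemma~\ref{Lem-K-01-GA} enters, supplying $\Upsilon(G)>3\Phi_{3,4}$ for all such graphs. In part~$(ii)$ this bound already exceeds the candidate minimum $3\Phi_{3,4}$, so no such graph can be extremal and the lower bound stands. In part~$(iii)$ one needs the additional comparison $3\Phi_{3,4}\ge2\Phi_{2,4}$ valid for $0<k\le1$; combined with $\Upsilon(G)>3\Phi_{3,4}$ it gives $\Upsilon(G)>2\Phi_{2,4}$, confirming that the $(n_2,n_3)=(1,0)$ configuration is the unique minimiser. The main obstacle is therefore the uniform numerical verification, over the whole interval $0<k\le1$, of the two coefficient chains $\Phi_{1,2}>\Phi_{2,2}>\Phi_{2,3}>\Phi_{2,4}>0$ and $\Phi_{1,3}>\Phi_{2,3}>\Phi_{3,3}>\Phi_{3,4}>0$ that underlie Lemma~\ref{Lem-K-01-GA}, together with the single comparison $3\Phi_{3,4}\ge2\Phi_{2,4}$; since each $\Phi_{i,j}$ is a transcendental function of $k$, these are most cleanly confirmed with Mathematica, after which the remaining combinatorial bookkeeping is routine.
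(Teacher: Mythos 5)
Your proposal is correct and follows exactly the route the paper intends: it instantiates the proof of Theorem~\ref{cor44} via Eq.~(\ref{Eq557bc-GA}), the congruence (\ref{Eq-K-01}), the positivity chains among the $\Phi_{i,j}$, and Lemma~\ref{Lem-K-01-GA}, with the simplification (which you correctly note) that all $\Phi_{i,j}>0$ for $0<k\le1$ removes any case split in $k$. You also rightly flag the one step the paper leaves implicit in part~$(iii)$ --- the comparison $3\Phi_{3,4}\ge 2\Phi_{2,4}$ needed because Lemma~\ref{Lem-K-01-GA} bounds $\Upsilon(G)$ below by $3\Phi_{3,4}$ rather than $2\Phi_{2,4}$ --- and this inequality does hold on $0<k\le1$, so nothing is missing.
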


\noindent {\bf Acknowledgement.} This work was supported by the National Natural Science Foundation of China (Grant No. 11701505).

\end{document}